\DeclareMathAlphabet{\mathpzc}{OT1}{pzc}{m}{it} 
 \newtheorem{Thm}{Theorem}[section]
 \newtheorem{Cor}[Thm]{Corollary}
 \newtheorem{Lem}[Thm]{Lemma}
 \newtheorem{Prop}[Thm]{Proposition}
 \theoremstyle{definition}
 \newtheorem{Def}[Thm]{Definition}
 \theoremstyle{remark}
 \newtheorem{Rem}[Thm]{Remark}
 \newtheorem*{ex}{Example}
 \numberwithin{equation}{section}
\theoremstyle{definition}
\theoremstyle{definition} 
\newcommand\funzione{\rightarrow}
\newcommand\enne{\mathbb{N}} 
\newcommand\erre{\mathbb{R}} 
\newcommand\ci{\mathbb{C}} 
\newcommand\alg{\mathbb{A}} 
\newcommand\quat{\mathbb{H}} 
\newcommand\oct{\mathbb{O}} 
\DeclareMathOperator{\re}{Re} 
\DeclareMathOperator{\im}{Im} 
\newcommand{\su}{\mathbb{S}}
\newcommand\zetabar{\bar{\zeta}}
\newcommand\vbar{\bar{v}}
\newcommand\zbar{\bar{z}}
\DeclareMathOperator{\de}{d \! \hspace{0.2ex}} 
\providecommand{\clint}[1]{\hspace{0.045ex}[#1]} 
\newcommand{\lra}{\longrightarrow}
\newcommand{\RR}{\mathbb{R}}
\newcommand{\CC}{\mathbb{C}}
\newcommand{\OO}{\Omega}
\newcommand{\mr}{\mathrm}
\newcommand{\mscr}{\mathscr}
\newcommand{\q}{\mathbb{H}}
\newcommand{\oc}{\mathbb{O}}
\newcommand{\mc}{\mathcal}
\newcommand{\ui}{\mathbf{i}} 
\newcommand{\1}{\mr{1}}
\newcommand{\II}{\mc{I}}
\newcommand{\sto}{\mr{u}}
\newcommand{\Sto}{\mr{U}}
\newcommand{\stx}{\mscr{S}}
\begin{document}

%
%
%
%
%
%
%
%
%

\title[Cauchy integral formula]{Noncommutative Cauchy integral formula}

\author{Riccardo Ghiloni}

\address{%
        Dipartimento di Matematica\\ 
        Universit\`a di Trento\\
        Via Sommarive 14\\ 
        38123 Trento\\ 
        Italy }

\email{ghiloni@science.unitn.it}

\thanks{R.\ Ghiloni and A.\ Perotti are partially supported by FIRB 2012 ``Differential Geometry and Geometric Function Theory", MIUR Project ``Propriet\`a geometriche delle variet\`a reali e com\-ples\-se" and GNSAGA of INdAM.
V.\ Recupero is a member of GNAMPA of INdAM}
\author{Alessandro Perotti}

\address{%
        Dipartimento di Matematica\\ 
        Universit\`a di Trento\\
        Via Sommarive 14\\ 
        38123 Trento\\ 
        Italy }

\email{perotti@science.unitn.it}

\author{Vincenzo Recupero}

\address{%
        Dipartimento di Scienze Matematiche\\ 
        Politecnico di Torino\\
        Corso Duca degli Abruzzi 24\\ 
        10129 Torino\\ 
        Italy}
\email{vincenzo.recupero@polito.it}

\subjclass{Primary 30C15; Secondary 30G35, 32A30, 17D05}

\keywords{Cauchy integral formula, Functions of a hypercomplex variable, Quaternions, Octonions, Clifford algebras}

\date{}

\begin{abstract}
The aim of this paper is to provide and prove the most general Cauchy integral formula for slice regular functions and for $C^1$ functions on a real alternative *-algebra. Slice regular functions represent a generalization of the classical concept of holomorphic {function} of a complex variable in the noncommutative and nonassociative settings. As an application, we obtain two kinds of local series expansion for slice regular functions.
\end{abstract}

\maketitle




\section{Introduction}

One of the main tasks in noncommutative complex analysis is the determination of the class of functions admitting a local power series expansion at every point of their domain of definition. 

Let $\alg$ denote the noncommutative structure we are working with: it may be, for instance,  the skew field {$\quat$} of quaternions, the nonassociative {division algebra} $\oct$ of octonions, the Clifford algebras $\erre_{p,q}$, or any real alternative *-algebra. The noncommutative setting requires a distinction between polynomials with left and right coefficients in $\alg$. If we consider, for instance, coefficients on the right of the indeterminate $x$ (the left case yields an analogous theory), then it is well known that the proper way to perform the multiplication consists in imposing commutativity of $x$ with the coefficients (cf.
\cite{Lam91}). Thus if $p(x) = \sum_n x^n c_n$ and $q(x) = \sum_n x^n d_n$, then their product is defined by
\begin{equation}\label{intro:p*q}
  (p*q)(x) := \sum_{n}x^n\bigg(\sum_{k+h=n} c_k d_h\bigg).
\end{equation}
Note that this product is different from the pointwise product of $p$ and $q$, even if one of the two polynomials is constant: indeed if $p(x) = c_0$ the pointwise product is $p(x)q(x) = \sum_n c_0(x^n d_n)$, while $(p*q)(x) = \sum_n x^n(c_0d_n)$. If instead $q(x) = d_0$ and $p(x) = \sum_n x^n c_n$, then $p(x)q(x) = \sum_n (x^nc_n)d_0$ and 
$(p*q)(x) = \sum_n x^n(c_n d_0)$. 

The problem of the power series representation was solved in the quaternionic case in \cite{GenSto12}: the class of functions admitting a power series expansion is given by the \emph{slice regular} functions. The theory of slice regularity on the quaternionic space was introduced in \cite{GenStr06, GenStr07} (see also \cite{GenStoStr13}), and then it was {extended} to Clifford algebras {and octonions in} \cite{ColSabStr09, GeStRocky}, and to {any real alternative} *-algebras in \cite{GhiPer11pr, GhiPer11}.

The notion of slice regular function generalizes the classical concept of holomorphic function of a complex variable. Let us briefly describe this notion in the simpler case {in which} $\alg$ is $\quat$ or $\oct$. Let $\su$ be the subset of square roots of $-1$ and, for each $J \in \su$, let $\CC_J$ be the plane generated by $1$ and $J$. Observe that each $\CC_J$ is a copy of the complex plane. 
The quaternions and the octonions have a ``slice complex'' nature, described by the following two properties: 
$\alg=\bigcup_{J \in \su}\CC_J$ and $\CC_J\cap \CC_K=\RR$ for every $J,K \in \su$ with $J \neq \pm K$. Let $D$ be an open subset of $\CC$ invariant under complex conjugation and let 
$\OO_D=\bigcup_{J \in \su}D_J$, where 
$D_J:=\{\rho+\sigma J \in \CC_J \, : \, \rho,\sigma \in \RR, \rho+\sigma i \in D\}$. A function $f:\OO_D \lra \alg$ of class $C^1$ is called \emph{slice regular} if, for every $J \in \su$, its restriction $f_J$ to $D_J$ is holomorphic with respect to the complex structures on $D_J$ and {on} $\alg$ defined by the left multiplication by $J$, i.e. if $\partial f_J/\partial \rho+J \, \partial f_J/\partial \sigma=0$ on 
$D_J$. The precise definition of slice regular functions in the most general setting of {real alternative} *-algebras is recalled in Section
\ref{S:Preliminaries} below.

One of the main achievements of the theory of slice regular functions is a Cauchy-type integral formula (see 
\cite{GenStr07, ColGenSab10, GhiPer11, ColSabStr11Mich, ColSab11JMAA}), which has many consequences also in noncommutative functional analysis (cf. \cite{ColSabStr08, ColSabStr11, ColSab11, GhiMorPer13, GhiRec14}).
It generalizes two classical formulas, the Cauchy integral formula for holomorphic functions and its extension to $C^1$ functions, sometimes called Cauchy--Pompeiu formula. Let us show it again in the case $\alg = \quat$ or $\alg = \oct$. If $D$ is bounded, its boundary is piecewise of class $C^1$, and $f$ is of class $C^1$
on the closure of $\OO_D$ in $\alg$, then for every $J \in \su$ it holds:
\begin{equation}\label{intro:cauchy formula}
  f(x) = 
  \frac{1}{2\pi} \int_{\partial D_J} C_y(x) \, J^{-1} \de y \ f(y) -
  \frac{1}{2\pi} \int_{D_J} C_y(x) \, J^{-1} \de \overline y \wedge \de y\ \frac{\partial f}{\partial \overline{y}}(y)
  \quad \forall x \in D_J,
\end{equation}
where the function $C_y$ denotes the \emph{(noncommutative) Cauchy kernel} defined by
\[
C_y(x) := (x^2 - 2\re(y) x + |y|^2)^{-1}(\overline{y}-x).
\]
For $\alg=\q$, the associativity allows to prove that formula \eqref{intro:cauchy formula} holds for all $x\in\Omega_D$.
The two integrals in \eqref{intro:cauchy formula} are defined in a natural way:
\[
  \int_{\partial D_J} C_y(x) \, J^{-1} \de y \ f(y) :=
  \int_0^1 C_{\alpha(t)}(x) \, J^{-1} \alpha'(t) \ f(\alpha(t)) \de t 
\]
and
\[
  \int_{D_J} C_y(x) \, J^{-1} \de \overline{y} \wedge \de y\ \frac{\partial f}{\partial \overline{y}}(y) := 
  2\int_{D_J} C_{\rho + \sigma J}(x) \,  \frac{\partial f}{\partial \overline{y}}(\rho + \sigma J) \de \rho \de \sigma,
\]
$\alpha : [0,1] \funzione \ci_J$ being a Jordan curve parametrizing $\partial D_J$, and $(\rho,\sigma)$ being the real coordinates in 
$\ci_J$. As usual $\partial f/\partial \overline{y} := \frac{1}{2}(\partial f_J/\partial \rho + J\partial f_J/\partial \sigma)$ and the fact that the differential $\de y$ appears on the left of $f(y)$ depends on the noncommutativity of $\alg$. Notice that, if $x$ and $y$  belong to the same $\CC_J$, and {hence} commute, then it turns out that $C_y(x)=(y-x)^{-1}$ and we find again the form of the classical Cauchy {formula} for holomorphic functions.

A drawback of formula \eqref{intro:cauchy formula} is that it is not a representation formula: indeed in the nonassociative case it holds  for $x \in D_J \subseteq \ci_J$ and not {on} the whole domain $\Omega_D$. An elementary example is the one in which $A=\oc$, $D$ is the open disk of $\CC$ centered at the origin with radius $2$,  $f(x):=xi$ and $J:=(i+j)/\sqrt{2}$, where $\{1,i,j,ij,k,ik,jk,k(ij)\}$ is the canonical basis of $\oc$. In fact, in this case, $k \in \OO_D$ and $f(k)=ki$, but the right hand side of formula \eqref{intro:cauchy formula} gives a different value in $x=k$, namely $(ki+kj)/2$.

The aim of the present paper is to find  a Cauchy integral formula, proved in Theorem~\ref{T:Cauchy formula} for general real alternative $^*$-algebras $\alg$, allowing to represent the values $f(x)$ when $x$ belongs to the whole domain $\Omega_D$ of $f$. In order to do this, we exploit the notion of \emph{slice product} between two slice regular functions $f$ and $g$, which is recalled in Definition \ref{D:slice product} below and will be denoted simply by $f \cdot g$. This product is the natural generalization to functions of the product {\eqref{intro:p*q}} of polynomials and allows us to provide the following Cauchy integral representation formula: 
\begin{align}\label{intro:general cauchy formula}
  f(x) 
  & = \frac{1}{2\pi} \int_{\partial D_J} \left[ C_y \cdot  \left( J^{-1} \de y \ f(y) \right) \right](x) -
        \frac{1}{2\pi} \int_{D_J} 
          \Big[ C_y \cdot  \Big( J^{-1} \de \overline y \wedge \de y\ \frac{\partial f}{\partial \overline{y}}(y) \Big) \Big](x)  
\end{align}
holding {for every $J \in \su$ and} \emph{for every $x \in \Omega_D$}, where the parentheses are omitted in the term $J^{-1} \de y \, f(y)$, because this product is proved to be associative (cf.~Remark~\ref{remark31}(iii)). Observe that with respect to $x$ the slice product in the integrand function of \eqref{intro:general cauchy formula} is computed in the variable $x$ between the function $C_y$ and the constant functions $J^{-1} \de y \, f(y)$ and $ J^{-1} \de \overline y \wedge \de y\ \frac{\partial f}{\partial \overline{y}}(y)$, $y$ being the integration variable. Therefore, denoting by $\cdot_x$ the slice product performed with respect to the variable $x$, we {can rewrite formula \eqref{intro:general cauchy formula} in the following more explicit way:}
\begin{multline}
  f(x)
   = \frac{1}{2\pi} \int_{\partial D_J} C_y(x) \cdot_x \, \left( J^{-1} \de y \ f(y) \right) -
         \frac{1}{2\pi} \int_{D_J} C_y(x) \cdot_x \, \Big( J^{-1} \de \overline y \wedge \de y\ \frac{\partial f}{\partial \overline{y}}(y) \Big)  
         \notag \\
   := \frac{1}{2\pi} \int_0^1 \left[ C_{\alpha(t)} \cdot \, \left( J^{-1} \alpha'(t) \ f(\alpha(t)) \right) \right](x) \de t   \\  
         \hfill-\frac{1}{\pi} \int_{D_J} \Big[ C_{\rho + \sigma J} \cdot \, \frac{\partial f}{\partial \overline{y}}{(\rho + \sigma J)} \Big](x) \, {\de\rho \de\sigma}\notag
\end{multline}
for each $x \in \Omega_D$. 
The noncommutative Cauchy kernel $C_y(x)$ is the inverse of the slice regular function $y-x$ with respect to the slice product: $C_y(x) \cdot_x (y-x)=(y-x) \cdot_x C_y(x)=1$.

The keypoint here is that we are considering the set of slice functions as an algebra by using the slice product instead of the pointwise product: whence formulas \eqref{intro:general cauchy formula} and \eqref{intro:cauchy formula} have extremely different natures. We refer the reader to \cite{GhPeSt} for more details concerning the algebra of slice functions.

The new Cauchy integral formula \eqref{intro:general cauchy formula}
allows us to obtain the series expansion at $x_0$ of a slice regular function $f$ with respect to slice powers 
$(x-x_0)^{\cdot n}$ or to spherical polynomials $\mathscr{S}_{x_0,n}(x)$ by using the classical method for complex holomorphic functions (cf.~Propositions~\ref{prop4_1} and \ref{prop4_2} for details). In the associative framework this result was achieved by a different method in \cite{GhiPer14}, where a proof is also sketched for the nonassociative case.

The proof of the Cauchy formula {\eqref{intro:general cauchy formula}} given in Section \ref{Cauchy} is new, also in the associative case.


\section{Preliminaries}\label{S:Preliminaries}


\subsection{Real alternative *-algebras}
Let us assume that
\begin{equation}\label{A real alternative}
  \text{$\alg$ is a finite dimensional real \emph{alternative algebra} with unit},
\end{equation}  
i.e. $\alg$ is a finite dimensional real algebra with unit $1_\alg$ such that the mapping 
\begin{equation}\label{A alternative}
 (x,y,z) \longmapsto (xy)z - x(yz) \quad \text{is alternating}.\notag
\end{equation}
Note that we are not assuming that $\alg$ is associative, but the following theorem holds (cf.\ \cite{Sch66}):

\begin{Thm}[Artin]\label{artin}
The subalgebra generated by any two elements of an alternative algebra is associative.
\end{Thm}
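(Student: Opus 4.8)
The plan is to prove that the \emph{associator} $(x,y,z):=(xy)z-x(yz)$ vanishes identically on the subalgebra $B$ generated by the two given elements $a,b\in\alg$, since this is precisely the assertion that $B$ is associative. As the associator is trilinear and $B$ is, as a real vector space, spanned by the (arbitrarily bracketed) monomials in $a$ and $b$, it is enough to check that $(p,q,r)=0$ for all such monomials $p,q,r$. First I would unpack the defining hypothesis: saying that the associator is alternating means it is skew-symmetric and vanishes as soon as two of its arguments coincide, which gives the left alternative law $(x,x,y)=0$, the right alternative law $(y,x,x)=0$ and the flexible law $(x,y,x)=0$. Polarizing these (replace $x$ by $x+z$ and subtract the diagonal terms) and writing $L_x\colon w\mapsto xw$ and $R_x\colon w\mapsto wx$ for the multiplication operators, one obtains $L_{x^2}=L_x^2$, $R_{x^2}=R_x^2$, $L_xR_x=R_xL_x$ together with their bilinear companions $L_{xy}+L_{yx}=L_xL_y+L_yL_x$ and $R_{xy}+R_{yx}=R_xR_y+R_yR_x$.

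The conceptual core, and the point at which the restriction to \emph{two} generators becomes essential, is the base case of the induction to come: any associator $(g_1,g_2,g_3)$ all of whose entries lie in $\{a,b\}$ must, by the pigeonhole principle, repeat one of the two generators, and therefore vanishes by the alternating hypothesis. With three or more generators this collapses --- the octonions require three generators and fail to be associative --- so the entire theorem rests on propagating this degree-three vanishing to monomials of arbitrary degree without ever invoking a genuinely three-generator associator.

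To effect this propagation I would first derive the Moufang identities $x(y(xz))=(xyx)z$, $((zx)y)x=z(xyx)$ and $(xy)(zx)=x(yz)x$ (the products $xyx$ being unambiguous by flexibility) from the operator relations above; in operator form these read $L_{xyx}=L_xL_yL_x$, $R_{xyx}=R_xR_yR_x$ and the corresponding mixed relation. Such identities allow one to pull a repeated factor together inside a long product. I would then induct on the total degree $\deg p+\deg q+\deg r$: writing the entry of largest degree as a product of two shorter monomials and applying the general associator (Teichm\"uller) identity \[ (pq,r,s)-(p,qr,s)+(p,q,rs)=p\,(q,r,s)+(p,q,r)\,s, \] valid in any algebra, one re-expresses the associator through associators of the shorter factors, which --- after the alternating law and the Moufang relations are used to exhibit a repeated generator --- either vanish outright or are of strictly lower complexity and so fall under the inductive hypothesis, the recursion bottoming out at the base case above.

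The main obstacle is exactly this final reduction, rather than the (routine, if lengthy) verification of the Moufang identities. The Teichm\"uller identity merely reshuffles brackets and preserves the total degree, so on its own it achieves nothing; all the force has to come from the alternating property. The delicate step is the bookkeeping needed to guarantee that the skew-symmetry of the associator, together with the Moufang relations, always produces enough cancellation to drive down a suitable complexity measure --- and that this can be carried out using only the two available generators. Making that induction close cleanly, so that every higher associator is genuinely reduced to ones with a repeated argument, is where the real difficulty lies.
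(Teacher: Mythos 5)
The paper offers no proof of this statement at all---it quotes Artin's theorem from Schafer's book \cite{Sch66}---so your attempt can only be measured against the standard literature proof. Your roadmap matches that proof in every ingredient: reduction by trilinearity to associators of monomials, the pigeonhole base case in degree three, the linearized alternative laws and the operator identities $L_{x^2}=L_x^2$, $R_{x^2}=R_x^2$, $L_xR_x=R_xL_x$, the Moufang identities, the Teichm\"uller identity, and an induction on degree. Every identity you state is correct.

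The gap is the one you yourself flag, and it is not a cosmetic one: the inductive step is never closed, and with the data you have put on the table it cannot be. Your only announced induction parameter is the total degree $\deg p+\deg q+\deg r$, and, as you observe, the Teichm\"uller identity preserves it: writing $p=p_1p_2$ and discarding the two terms killed by the inductive hypothesis leaves $(p_1p_2,q,r)=(p_1,p_2q,r)-(p_1,p_2,qr)$, in which both surviving associators have the same total degree as the one you started from. So no strictly decreasing complexity measure is ever exhibited, and the assertion that ``the alternating law and the Moufang relations exhibit a repeated generator'' is exactly the thing that needs to be proved. The missing organizing idea---and a second place where the two-generator hypothesis enters, not only in the base case as you suggest---is this: by the inductive hypothesis all monomials of degree $<n$ associate, so each argument of degree $\ge 2$ can be rewritten with a definite first (or last) letter from $\{a,b\}$; by pigeonhole two of the three arguments then share that letter, say $p=ap'$ and $q=aq'$, and it is precisely this common factor that the linearized alternative laws and Moufang identities need in order to produce an associator with a genuinely repeated entry (hence zero) plus terms of strictly lower total degree. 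Without arranging the case analysis around that second pigeonhole, the recursion you describe has no visible reason to terminate; as it stands the proposal is an accurate plan whose central step is left open.
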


Here we assume that the real dimension of $\alg$ is strictly greater than $1$,
so that $\alg \neq \{0\}$, i.e. $1_\alg \neq 0$.
A consequence of the bilinearity of the product in $\alg$ is the equality
\begin{equation}\label{r(ab) = (ra)b = a(rb)}
  r(xy) = (rx)y = x(ry) \qquad \forall r \in \erre, \quad \forall x, y \in \alg.
\end{equation}
Therefore if we identify $\erre$ with the subalgebra generated by $1_\alg$, then the notation $rx$ is not ambiguous if $r \in \erre$ and $x \in \alg$. Notice that 
\begin{equation}\label{ra = ar}
  rx = xr \qquad \forall r \in \erre, \quad \forall x \in \alg.
\end{equation}
We also assume that $\alg$ is a \emph{*-algebra}, that is
\begin{equation}\label{A *-algebra}
  \text{$\alg$ is endowed with a \emph{*-involution} $\alg \funzione \alg : x \longmapsto x^c$},
\end{equation} 
i.e. a real linear mapping such that
\begin{align*}
   (x^c)^c &= x \qquad\ \forall x \in \alg, \\  \notag
   (xy)^c &= y^c x^c \quad \forall x, y \in \alg, \\  \notag
   r^c &= r \qquad\ \forall r \in \erre.  \notag
\end{align*}
Summarizing the previous assumptions \eqref{A real alternative} and \eqref{A *-algebra}, we say that
\begin{equation}\label{A alternative *-algebra}
  \text{$\alg$ is a \emph{finite dimensional real alternative *-algebra with unity}}.
\end{equation}
We will assume \eqref{A alternative *-algebra} in the remainder of the paper and we will endow $\alg$ with the topology induced by any norm on it as a real vector space.


\begin{Def}
The \emph{trace} $t(x)$ and the \emph{squared norm} $n(x)$ of any $x \in \alg$ are defined as follows
\begin{equation}\label{trace and squared norm}
  t(x) := x + x^c, \quad n(x) := x x^c, \qquad x \in \alg.  \notag
\end{equation}
Moreover, we define $Q_\alg$, the \emph{quadratic cone of $\alg$}, and the set $\su_\alg$ of \emph{square roots of $-1$} by:
\begin{equation}
  Q_\alg := \erre \cup \{x \in \alg\ :\ t(x) \in \erre,\ n(x) \in \erre,\ t(x)^2 - 4n(x) < 0\},  \notag
\end{equation}
\begin{equation}
  \su_\alg := \{J \in Q_\alg \, : \, J^2 = -1\}.  \notag
\end{equation}
For each $J \in \su_\alg$, we denote by $\ci_J :=\langle 1, J \rangle$ the subalgebra of $\alg$ generated by $J$. Finally, the 
\emph{real part $\re(x)$} and the \emph{imaginary part $\im(x)$} of an element $x$ of $Q_\alg$ are given by
\begin{equation}
  \re(x) := (x+x^c)/2, \quad \im(x) := (x-x^c)/2, \qquad x \in Q_\alg.  \notag
\end{equation}
\end{Def}
When $x\in Q_\alg\smallsetminus\erre$, then $\im(x)=sJ$, with $s=\sqrt{n(\im(x))}\in\erre^+$, $J\in\su_\alg$. Note that $\im(x)$  includes also the imaginary unit $J$ of $x$.

Since $\alg$ is assumed to be alternative, one can prove (cf. \cite[Proposition 3]{GhiPer11}) that the quadratic cone $Q_\alg$ has the following two properties, which describe its ``slice complex'' nature:
\begin{align}
  & Q_\alg = \bigcup_{J \in \su_\alg} \ci_J, \label{cone = union of planes} \\
  & \ci_J \cap \ci_K = \erre \qquad \forall J, K \in \mathbb{S}_\alg, \ J \neq \pm K. \label{eq:intersection}
\end{align}
Two simple consequences of \eqref{cone = union of planes} are the following:
\begin{align}
 & \exists x^{-1}=n(x)^{-1} x^c \qquad \forall x \in Q_\alg \smallsetminus \{0\}, \label{inverse of x} \\
 & x^n \in Q_\alg \qquad \forall x \in Q_\alg, \quad \forall n \in \enne.  \notag
\end{align}
Observe that if $x = r+sJ \in \ci_J$, with $r, s \in \erre$, $J \in \su_\alg$, then $x^c = r - s J$. 




\subsection{Slice functions and their slice product}

Let us recall that the complexification of $\alg$ is the real vector space given by the {tensor} product
\begin{equation}
  \alg_\ci := \alg \otimes _\erre \ci \simeq \alg^2,  \notag
\end{equation} 
that can be described by setting $\1 := (1, 0) \in \alg^2$ and $\ui := (0,1) \in \alg^2$, so that every $v = (x,y) \in \alg^2$ can be uniquely written in the form $v = x1 + y\ui = x + y \ui$, and $\ui$ is an \emph{imaginary unit}. Thus the sum in $\alg_\ci$ reads 
$(x+y\ui) + (x'+y'\ui) = (x+x') + (y+y')\ui$
and the product defined by
\begin{equation}\label{product in A_C}
  (x+y\ui)(x'+y'\ui) := (xx'-yy') + (xy'+yx')\ui,  \notag
\end{equation}
makes $\alg_\ci$ a complex alternative algebra as well, therefore $\alg_\ci = \alg + \alg \ui = \{x+y\ui\ :\ x, y \in \alg\}$ and 
$\ui^2= -1$. The \emph{complex conjugation} of $v = x+y\ui \in \alg_\ci$ is defined by $\vbar := x-y\ui$.  

We are now in position to recall the notion of slice functions. Let $D$ be a subset of $\CC$, invariant under the complex conjugation $z=r+si \longmapsto \overline{z}=r-si$, $r, s \in \erre$. Define
\begin{equation}\label{Omega D}
\OO_D:=\{r + s J \in Q_\alg\, : \, r, s \in \RR, J \in \su_\alg, r+s i \in D\}.  \notag
\end{equation}
A subset of $Q_\alg$ is said to be \emph{circular} if it is equal to $\OO_D$ for some set $D$ as above. 

Suppose now that $D$ is open in $\CC$, not necessarily connected. Thanks to \eqref{cone = union of planes} and \eqref{eq:intersection}, $\OO_D$ is a relatively open subset of $Q_\alg$.

\begin{Def}
A function $F=F_1+F_2\ui:D \lra \alg_\CC$ is called \emph{stem function} if $F(\zbar) = \overline{F(z)}$ for every $z \in D$. The stem function $F=F_1+F_2\ui$ on $D$ induces a \emph{left slice function} $\II(F):\OO_D \lra \alg$ on $\OO_D$ as follows. Let 
$x \in \OO_D$. By \eqref{cone = union of planes}, there exist $r, s \in \RR$ and $J \in \su_\alg$ such that $x = r+sJ$. Then we set:
\begin{equation}\label{I(F)=}
  \II(F)(x):=F_1(z)+J \, F_2(z), \quad \text{where $z=r+s i \in D$}.
\end{equation}
\end{Def}

The reader observes that the definition of $\II(F)$ is well-posed. In fact, if $x \in \OO_D \cap \RR$, then $r = x$, $s = 0$ and 
$J$ can be arbitrarily chosen in $\su_\alg$. However, $F_2(z)=0$ and hence $\II(F)(x)=F_1(x)$, independently from the choice of 
$J$. If $x \in \OO_\alg \smallsetminus \RR$, then $x$ has the following two expressions: $x = r+sJ = r+(-s)(-J)$, where $r=\re(x)$, 
$s=\sqrt {n(\im(x))}$ and $J=s^{-1}\im(x)$. Anyway, if $z := r+s i$, we have:
$\II(F)(r+(-s)(-J))=F_1(\overline{z})+(-J)F_2(\overline{z})=F_1(z)+(-J)(-F_2(z))=F_1(z)+J \, F_2(z)=\II(F)(r+sJ).$

It is important to observe that every left slice function $f:\OO_D \lra \alg$ is induced by a unique stem function $F=F_1+F_2\ui$. In fact, it is easy to verify that, if $x_J = r+s J \in \OO_D$ and $z=r+s i \in D$, then $F_1(z)=(f(x_J)+f(x_J^c))/2$ and
$F_2(z)=-J \, (f(x_J)-f(x_J^c))/2$. Therefore
one obtains the following representation formula (see \cite[Proposition~5]{GhiPer11}):
\begin{equation}\label{repr}
f(x)=\frac12(f(x_J)+f(x_J^c))-\frac I2(J \, (f(x_J)-f(x_J^c)))\quad \forall x=r+sI\in\Omega_D,
\end{equation}
which implies the next  proposition.

\begin{Prop}\label{P:representation}
Every left slice function is uniquely determined by its values on a plane $\ci_J$.
\end{Prop}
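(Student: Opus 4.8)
The plan is to deduce the statement directly from the representation formula \eqref{repr}, which has just been established, so that the proof becomes essentially a matter of reading that formula correctly. Fix $J \in \su_\alg$ and set $D_J := \ci_J \cap \Omega_D$, the slice of $\Omega_D$ lying in the plane $\ci_J$. I want to show that the restriction $f|_{D_J}$ of a left slice function $f$ determines $f$ on all of $\Omega_D$; equivalently, that two left slice functions agreeing on $D_J$ coincide everywhere.

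First I would take an arbitrary point $x = r + sI \in \Omega_D$, with $r, s \in \erre$ and $I \in \su_\alg$, and introduce the two associated points $x_J := r + sJ$ and $x_J^c = r - sJ$. The key preliminary observation is that both of these lie in $D_J$: since $x \in \Omega_D$ we have $r + si \in D$, and because $D$ is invariant under complex conjugation, $r - si \in D$ as well; hence $r + sJ$ and $r - sJ$ both belong to $\ci_J \cap \Omega_D = D_J$. This is the one point that genuinely uses the standing hypothesis on $D$.

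Next I would invoke \eqref{repr}, which expresses the value $f(x)$ purely in terms of $f(x_J)$, $f(x_J^c)$, and the units $I, J$:
\[
  f(x)=\tfrac12\big(f(x_J)+f(x_J^c)\big)-\tfrac I2\big(J \, (f(x_J)-f(x_J^c))\big).
\]
Since $x_J, x_J^c \in D_J$, the right-hand side depends only on $f|_{D_J}$. Consequently, if $g$ is a second left slice function on $\Omega_D$ with $g|_{D_J} = f|_{D_J}$, then $g(x_J) = f(x_J)$ and $g(x_J^c) = f(x_J^c)$, and applying \eqref{repr} to $g$ yields $g(x) = f(x)$. As $x \in \Omega_D$ was arbitrary, $f = g$, which is exactly the asserted uniqueness. (An equivalent route would pass through the stem function $F = F_1 + F_2 \ui$ inducing $f$, using that $f|_{D_J}$ recovers $F_1$ and $F_2$ on all of $D$ via $F_1(z)=(f(x_J)+f(x_J^c))/2$ and $F_2(z)=-J\,(f(x_J)-f(x_J^c))/2$, and then appealing to the uniqueness of the inducing stem function; but the argument via \eqref{repr} is more immediate.)

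I expect no real obstacle here, since the substantive content is already packaged into \eqref{repr}. The only matters requiring a moment of care are the two just flagged: verifying that both auxiliary points $x_J$ and $x_J^c$ genuinely lie in $D_J$, and checking that the real case $s = 0$ (where $I$ is not uniquely determined by $x$) introduces no ambiguity — there one has $x_J = x_J^c = r$, so the second term of \eqref{repr} vanishes and the formula collapses to $f(x) = f(r)$, independently of the chosen $I$.
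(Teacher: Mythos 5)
Your proof is correct and follows essentially the same route as the paper, which states Proposition~\ref{P:representation} as an immediate consequence of the representation formula \eqref{repr} (itself obtained from the recovery of the stem components $F_1,F_2$ from the values $f(x_J)$, $f(x_J^c)$). Your added checks --- that $x_J, x_J^c \in D_J$ via the conjugation-invariance of $D$, and that the case $s=0$ causes no ambiguity --- are exactly the points the paper leaves implicit.
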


Let us introduce a relevant subclass of left slice functions. 

\begin{Def} \label{eq:real-slice}
Let $F = F_1 + F_2\ui : D \lra \alg_\CC$ be a stem function on $D$. The left slice function $f=\II(F)$ induced by $F$ is said to be \emph{slice preserving} if $F_1$ and $F_2$ are real-valued.
\end{Def}

One can prove the following (cf. \cite[Proposition 10]{GhiPer11}).

\begin{Prop}\label{condition for f real}
Let $Q_\alg\ne\ci$. A left slice function $f$ is slice preserving if and only if $f(\OO_D \cap \CC_J) \subseteq \CC_J$ for every $J \in \su_\alg$.
\end{Prop}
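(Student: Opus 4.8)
The plan is to pass to the unique stem function $F = F_1 + F_2\ui$ with $f = \II(F)$ (whose uniqueness was recorded just before Proposition~\ref{P:representation}) and to reduce both implications to the single identity $\bigcap_{J \in \su_\alg} \CC_J = \erre$; this intersection identity is exactly where the hypothesis $Q_\alg \neq \ci$ will be spent. So the scheme is: prove the easy implication by a direct computation with $\II(F)$, prove the converse by recovering $F_1$ and $F_2$ from $f$ and showing they land in every plane $\CC_J$, and finally collapse ``lying in every $\CC_J$'' to ``being real'' using the hypothesis.

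For the forward implication, assume $f$ is slice preserving, so $F_1(z), F_2(z) \in \erre$ for all $z \in D$. Fix $J \in \su_\alg$ and take $x = r + sJ \in \OO_D \cap \CC_J$ with $r,s \in \erre$, and set $z = r + si \in D$. By the defining formula \eqref{I(F)=} we have $f(x) = F_1(z) + J\,F_2(z)$, and since $F_2(z)$ is real, \eqref{ra = ar} gives $J\,F_2(z) = F_2(z)\,J$, whence $f(x) = F_1(z) + F_2(z)\,J \in \langle 1, J\rangle = \CC_J$. This is the whole of the easy direction.

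For the converse, assume $f(\OO_D \cap \CC_J) \subseteq \CC_J$ for every $J \in \su_\alg$. For an arbitrary $J$, write $x_J = r + sJ$, $x_J^c = r - sJ$ and $z = r + si$; the inversion formulas recalled before Proposition~\ref{P:representation} give
\[
  F_1(z) = \tfrac12\bigl(f(x_J) + f(x_J^c)\bigr), \qquad F_2(z) = -\tfrac{J}{2}\bigl(f(x_J) - f(x_J^c)\bigr).
\]
By hypothesis $f(x_J), f(x_J^c) \in \CC_J$, and since $\CC_J$ is a commutative subalgebra containing $J$, both right-hand sides lie in $\CC_J$. The key observation is that $F_1(z)$ and $F_2(z)$ are intrinsic to $F$ and hence independent of the chosen $J$, so in fact $F_1(z), F_2(z) \in \bigcap_{J \in \su_\alg} \CC_J$ for every $z \in D$.

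It remains to identify this intersection with $\erre$, and this is the one place where $Q_\alg \neq \ci$ is indispensable and is the main (if short) obstacle: if $\su_\alg$ consisted of a single pair $\{J,-J\}$ then by \eqref{cone = union of planes} one would have $Q_\alg = \CC_J \cong \ci$, contrary to assumption, so there exist $J, K \in \su_\alg$ with $J \neq \pm K$; then \eqref{eq:intersection} yields $\bigcap_{J} \CC_J \subseteq \CC_J \cap \CC_K = \erre$, while $\erre \subseteq \CC_J$ always holds. Hence $F_1(z), F_2(z) \in \erre$ for all $z$, i.e.\ $f$ is slice preserving. The only points needing care are the well-definedness (independence of $J$) of the formulas for $F_1, F_2$, which follows from uniqueness of the stem function, and the precise deployment of $Q_\alg \neq \ci$ to pass from membership in each $\CC_J$ to membership in $\erre$.
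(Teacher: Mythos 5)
Your proof is correct. Note that the paper itself gives no proof of this proposition---it defers to \cite[Proposition 10]{GhiPer11}---but your argument is the standard one: the easy direction from \eqref{I(F)=} plus \eqref{ra = ar}, and the converse by recovering $F_1(z)=\tfrac12(f(x_J)+f(x_J^c))$ and $F_2(z)=-\tfrac{J}{2}(f(x_J)-f(x_J^c))$ inside each $\CC_J$, then collapsing $\bigcap_{J\in\su_\alg}\CC_J$ to $\erre$ via \eqref{eq:intersection}, which is exactly where $Q_\alg\neq\ci$ is needed to produce two planes $\CC_J,\CC_K$ with $J\neq\pm K$.
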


In general, the pointwise product of slice functions is not a slice function. However, if $F=F_1+F_2\ui$ and $G=G_1+G_2\ui$ are stem functions, then it is immediate to see that their pointwise product
\begin{equation}\label{pointwise product of stems}
FG=(F_1G_1-F_2G_2)+(F_1G_2+F_2G_1)\ui   \notag
\end{equation}
is again a stem function. In this way, we give the following definition.

\begin{Def}\label{D:slice product}
Let $f=\II(F)$ and $g=\II(G)$ be two left slice functions on $\OO_D$. We define the \emph{slice product $f \cdot g$} as the left slice function $\II(FG)$ on $\OO_D$.
\end{Def}

In the remainder of the paper, a constant function will be denoted by its value: if $f(x) = a \in \alg$ for every $x \in \Omega_D$, we will write $f = a$.
Observe that it holds $a\cdot b=ab$ for every pair of constants $a,b\in\alg$.


\subsection{Slice regular functions}

Our next aim is to recall the concept of left slice regular function, which generalizes the notion of holomorphic function from $\CC$ to any real alternative *-algebra like $\alg$.

Let $F : D \funzione \alg_\CC$ be a stem function with components $F_1, F_2 : D \funzione \alg$.  Since we endow $\alg$ with the topology induced by any norm on it as a finite dimensional real vector space, if $z = r+ s i$, $r, s \in \erre$, denotes the complex variable in 
$\ci$, it makes sense to consider the partial derivatives $\partial F/\partial r$, $\partial F/\partial s\,\ui$, which are also stem functions. 

\begin{Def}\label{def:slice-regular}
Let $F = F_1 + F_2\ui :  D  \funzione \alg_\CC$ be a stem function belonging to $C^1(D;\alg_\ci)$ (i.e. 
$F_1, F_2 \in C^1(D;\alg)$). Let us denote by $z =r + s i$, $r, s \in \erre$, the complex variable in $\ci$. We define the continuous stem functions  $\partial F/\partial z : D \funzione \alg_\ci$ and $\partial F/\partial \overline{z} : D \funzione \alg_\ci$ by
\begin{equation}
  \frac{\partial F}{\partial z} := \frac{1}{2}\left(\frac{\partial F}{\partial r} - \frac{\partial F}{\partial s}\ui\right), \qquad
  \frac{\partial F}{\partial \overline{z}} := \frac{1}{2}\left(\frac{\partial F}{\partial r} + \frac{\partial F}{\partial s}\ui\right).  \notag
\end{equation}
If $f = \II(F) : \OO_D \funzione \alg$ is the left slice function induced by $F$, we define the continuous slice functions
\begin{equation}
  \frac{\partial f}{\partial x} := \II \left(\frac{\partial F}{\partial z}\right), \qquad
 \frac{\partial f}{\partial x^c} := \II \left(\frac{\partial F}{\partial \overline{z}}\right).  \notag
\end{equation}
We say that $f=\II(F)$ is \emph{left slice regular} if $\partial f/\partial x^c = 0$.
\end{Def} 

%

Significant examples of slice regular functions are the \emph{polynomials with right coefficients in $\alg$}, i.e. functions 
$p : Q_\alg \funzione \alg$ of the form $p(x) = \sum_{k=0}^n x^k c_k$ with $c_k \in \alg$, $n \in \enne$. 
If we identify, for simplicity, the imaginary unit $\mathbf{i}$ of $\mathbb{A}_{\mathbb{C}}=\mathbb{A} \otimes_{\mathbb{R}}
\mathbb{C}$ with the imaginary unit $i$ of $\mathbb{C}=\mathbb{R} \otimes_{\mathbb{R}} \mathbb{C}$,
we have that $p = \II(P)$ where $P : \ci \funzione \alg_\ci$ is defined by $P(z) = \sum_{k=0}^n z^k c_k$. Given two polynomials $p(x) := \sum_{k = 0}^{n} x^k c_k$ and $q(x) := \sum_{k = 0}^{m} x^k d_k$, their \emph{star product} 
$p * q : Q_\alg \funzione \alg$ is defined by setting 
\begin{equation}
  (p*q)(x) := \sum_{j=0}^{n+m} x^j\bigg(\sum_{k+h=j} c_k d_h\bigg),  \notag
\end{equation}
i.e. we impose the commutativity for the product of the variable $x$ with the coefficients. Note that $p*q \neq pq$, the pointwise product. In fact the following {result} holds (\cite[Proposition 12]{GhiPer11}).

\begin{Prop}\label{slice prod. polinomi}
If $p$ and $q$ are polynomials with right coefficients in $\alg$, then $  p*q = p \cdot q$,
i.e.\ the star product is equal to the slice product.
\end{Prop}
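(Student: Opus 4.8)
The plan is to pass to the level of stem functions, where by construction the slice product is nothing but the pointwise product, and then to check that the pointwise product of the two polynomial stem functions reproduces exactly the star product.

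First I would record the stem functions attached to the two polynomials. As recalled in the excerpt, $p = \II(P)$ and $q = \II(Q)$, where $P, Q : \ci \funzione \alg_\ci$ are given by $P(z) = \sum_{k=0}^n z^k c_k$ and $Q(z) = \sum_{h=0}^m z^h d_h$. By Definition \ref{D:slice product} the slice product is $p \cdot q = \II(PQ)$, $PQ$ being the pointwise product of stem functions. Since two slice functions coincide as soon as their stem functions do, it therefore suffices to prove the identity $P(z)Q(z) = \sum_{j=0}^{n+m} z^j(\sum_{k+h=j} c_k d_h)$ for every $z \in \ci$, because the right-hand side is precisely the stem function $R$ with $\II(R) = p*q$.

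The core of the argument is the computation of $P(z)Q(z)$ inside $\alg_\ci$. Here I would use that $\alg_\ci$ is a complex algebra, so that each complex power $z^k$, regarded as the scalar $z^k\1 \in \alg_\ci$, is central and associates with every element; in particular it commutes with the coefficients $c_k, d_h \in \alg$ and may be moved freely across products. Expanding by bilinearity and exploiting this centrality, I would get $P(z)Q(z) = \sum_{k=0}^n \sum_{h=0}^m z^k c_k z^h d_h = \sum_{k,h} z^{k+h}(c_k d_h)$, and then collect the terms with $k+h = j$ to obtain $\sum_{j=0}^{n+m} z^j(\sum_{k+h=j} c_k d_h)$. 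This is exactly $R$, whence $p \cdot q = \II(PQ) = \II(R) = p*q$, as desired.

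The only delicate point is the centrality just invoked, since $\alg$ is not assumed associative and the rearrangement $c_k z^h = z^h c_k$ together with the regrouping of powers is where non-associativity could interfere. I expect this to be routine once the product rule of $\alg_\ci$ is unwound: writing $z^h = a + b\ui$ with $a, b \in \erre$, the centrality and associativity of the real scalar $a$ follow from \eqref{ra = ar} and \eqref{r(ab) = (ra)b = a(rb)}, while the centrality of $\ui$ is a direct one-line check from the defining product formula of $\alg_\ci$, both verifications being independent of associativity. No further subtlety arises, so this is the main—and essentially the only—step requiring care.
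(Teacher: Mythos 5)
Your proof is correct. Note that the paper does not actually prove this proposition itself --- it simply cites \cite[Proposition 12]{GhiPer11} --- so there is no in-paper argument to compare against; your route through stem functions (reduce $p\cdot q=p*q$ to the pointwise identity $P(z)Q(z)=\sum_j z^j\big(\sum_{k+h=j}c_kd_h\big)$ in $\alg_\ci$) is exactly the natural one, and it is essentially the argument of the cited reference. You also correctly isolate and dispose of the one genuine subtlety: the element $z^k\1=a+b\ui$ of $\alg_\ci$ is central and associates with everything because real scalars do so by \eqref{r(ab) = (ra)b = a(rb)} and \eqref{ra = ar}, and the corresponding properties of $\ui$ follow from a direct check on the product formula of $\alg_\ci$, none of which uses associativity of $\alg$.
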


For any $y \in Q_\alg$, the \emph{characteristic polynomial of $y$} is the slice preserving slice regular function $\Delta_y : Q_\alg \funzione Q_\alg$ induced by the stem function $\CC \funzione \alg_\CC : z 	\mapsto (y-z)(y^c-z)=z^2-zt(y)+n(y)$,
\begin{equation}\label{delta}
\Delta_y(x) :=(y-x) \cdot_x (y^c-x)=x^2-xt(y) + n(y), \qquad x \in Q_\alg.  \notag
\end{equation}
Observe that $y \in \ci_J$ for some $J \in \su_\alg$, therefore the set of zeroes of $\Delta_y$ is 
\begin{equation}
  \su_y:=\{\xi+\eta K \in Q_\alg \, : \, \xi,\eta\in\erre,   K \in \su_\alg,  y= \xi + \eta J\}.  \notag
\end{equation}
Hence we can define the  left slice regular function $C_y : Q_\alg \smallsetminus \su_y \funzione \alg$ by setting
\begin{equation}\label{cauchy kernel}
  C_y(x) := \II((z^2-zt(y)+n(y))^{-1}(y^c-z)) =\Delta_y(x)^{-1}(y^c-x).
\end{equation}
We say that $C_y$ is the \emph{Cauchy kernel for left slice regular functions on $\alg$}.

By definition \eqref{cauchy kernel}, $C_y$ turns out to be the inverse of the slice regular function $y-x$ with respect to the slice product; that is, $C_y(x) \cdot_x (y-x)=(y-x) \cdot_x C_y(x)=1$. If $x \in \CC_J$, then $x$ commutes with $y$ and $y^c$, thus $\Delta_y(x)=(y^c-x)(y-x)$ and, if $x \not\in \{y,y^c\}$, $\Delta_y(x)^{-1}=(y-x)^{-1}(y^c-x)^{-1}$. This ensures that 
\begin{equation}\label{cauchy kernel in the Jplane}
C_y(x) = (y-x)^{-1} \qquad \forall x,y \in \mathbb{C}_J,\ x \neq y, \ x \neq y^c.  \notag
\end{equation}


\section{Cauchy integral formula} \label{Cauchy}

We start with a lemma where we introduce a useful complex structure on $\alg$.

\begin{Lem}\label{L:phiJ C-isomorphism}
If $J \in \su_\alg$, then the mapping $\phi_J : \ci \funzione \ci_J$ defined by
\begin{equation}\label{phiJ}
 \phi_J(r+si) := r+sJ, \qquad r,s \in \erre,  
\end{equation}
is a complex algebra isomorphism. 
Moreover, the product $\ci \times \alg \funzione \alg: (z,x) \mapsto zx$ defined by
\begin{equation}\label{product in V_J}
  z x := \phi_J(z) x, \qquad z \in \ci, \ x \in \alg,
\end{equation}
makes $\alg$ a complex vector space.
\end{Lem}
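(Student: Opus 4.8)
The plan is to treat the two assertions separately, dealing first with the algebraic properties of $\phi_J$ and then deducing the complex vector space structure. The preliminary observation is that $\ci_J = \langle 1, J\rangle$ is a two-dimensional commutative and associative real subalgebra of $\alg$ with $\erre$-basis $\{1, J\}$. Indeed, since $J \in \su_\alg$ satisfies $J^2 = -1$, and no real number is a square root of $-1$, the elements $1$ and $J$ are linearly independent over $\erre$; moreover the subalgebra generated by $J$ is associative by Artin's Theorem~\ref{artin}, and the relations $J^2 = -1$, $J^3 = -J$, $J^4 = 1$ show that every power of $J$ lies in $\mathrm{span}_\erre\{1, J\}$, so this span is exactly $\ci_J$ and is closed under the (commutative, associative) product.

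First I would check that $\phi_J$ is a complex algebra isomorphism. Additivity and $\erre$-linearity are immediate from the definition \eqref{phiJ}, and $\phi_J$ maps the $\erre$-basis $\{1, i\}$ of $\ci$ bijectively onto the $\erre$-basis $\{1, J\}$ of $\ci_J$, so it is a bijection. For multiplicativity I would compute directly: writing $z = r + si$ and $z' = r' + s'i$, the product $\phi_J(z)\phi_J(z') = (r + sJ)(r' + s'J)$ expands, using $J^2 = -1$ together with the real-bilinearity of the product and \eqref{r(ab) = (ra)b = a(rb)}, to $(rr' - ss') + (rs' + sr')J = \phi_J(zz')$. Since $\phi_J(1) = 1$ and $\phi_J(i) = J$ sends the imaginary unit of $\ci$ to the imaginary unit $J$ of $\ci_J$, the map $\phi_J$ respects the complex structures and is therefore a complex algebra isomorphism.

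For the second assertion I would verify the complex vector space axioms for the operation $zx := \phi_J(z)x$. Compatibility with the real structure follows from \eqref{r(ab) = (ra)b = a(rb)} (for $z = r \in \erre$ one gets $\phi_J(r)x = rx$), the two distributive laws follow from additivity of $\phi_J$ and the bilinearity of the product in $\alg$, and $1x = \phi_J(1)x = x$ is clear. The only axiom requiring care, and the crux of the whole lemma, is the associativity of scalars $(zz')x = z(z'x)$, that is $(\phi_J(z)\phi_J(z'))x = \phi_J(z)(\phi_J(z')x)$; here $\alg$ is \emph{not} assumed associative, so this is not automatic. I would reduce it to an associator computation: writing $\phi_J(z) = r + sJ$ and $\phi_J(z') = r' + s'J$, the trilinearity over $\erre$ of the associator $[a,b,c] := (ab)c - a(bc)$, combined with $[1, b, c] = 0$ (valid since $1$ is the unit), leaves only the single term $ss'[J, J, x]$. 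The point is that $[J, J, x] = 0$ because the associator of $\alg$ is an \emph{alternating} trilinear map and hence vanishes whenever two of its arguments coincide. This is precisely where the hypothesis of alternativity, rather than full associativity, enters, and it is the main obstacle; the remaining verifications are routine.
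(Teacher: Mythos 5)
Your proof is correct and follows essentially the same route as the paper: both arguments reduce the only nontrivial vector-space axiom, associativity of scalar multiplication, to the single identity $(JJ)x = J(Jx)$, with the remaining verifications being routine bilinearity. The one small difference is that you obtain this identity from the alternating property of the associator (i.e.\ $[J,J,x]=0$, which is just the definition of alternativity), whereas the paper invokes Artin's Theorem~\ref{artin}; your justification is, if anything, marginally more elementary.
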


\begin{proof}
The fact that $\phi_J$ is an isomorphism is an easy consequence of  \eqref{r(ab) = (ra)b = a(rb)} and \eqref{ra = ar}. In order to prove that \eqref{product in V_J} makes $\alg$ a complex vector space, we need to invoke Artin's Theorem \ref{artin}: indeed if $x \in \alg$,
then $J(Jx) = (JJ)x = -x$, and this implies, together with a straightforward calculation, that $z_1(z_2x) = (z_1z_2)x$ for every $z_1, z_2 \in \ci$. The remaining axioms are trivially satisfied.
\end{proof}

From Lemma \ref{L:phiJ C-isomorphism} it follows that $\phi_J(z^{-1}) = (\phi_J(z))^{-1}$ for any $z \neq 0$, and the product of $\alg$ is commutative and associative in $\ci_J$.

It will be useful to consider $\alg_\ci$ as a complex vector space as well, indeed one can easily infer the following {lemma}.

\begin{Lem}
The product 
$\ci \times \alg_\ci \funzione \alg_\ci: (z,v) \mapsto zv$:
\begin{equation}\label{product by a complex scalar in A_C}
  (r+si)(x+y\ui) := (rx - sy) + (ry + sx)\ui
\end{equation}
for $z = r + si$, $v = x+y\ui$, $r, s \in \erre$, $x, y \in \alg$,
makes $\alg_\ci$ a complex vector space.
\end{Lem}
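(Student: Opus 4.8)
The plan is to verify directly that the proposed scalar multiplication $(r+si)(x+y\ui) := (rx-sy)+(ry+sx)\ui$ satisfies the axioms of a complex vector space on $\alg_\ci$, regarded purely as a real vector space. The key observation is that $\alg_\ci$ is already a real vector space (it is $\alg\otimes_\erre\ci\simeq\alg^2$), so the real-scalar and additive structure is given. What must be checked is that the formula \eqref{product by a complex scalar in A_C} defines an action of the field $\ci$ extending the real scalar action and respecting addition.

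First I would note that the additive axioms—distributivity of the complex scalar over vector addition, namely $z(v+w)=zv+zw$—follow immediately from bilinearity, since each of $rx-sy$ and $ry+sx$ is $\erre$-linear in $(x,y)$. Likewise distributivity over scalar addition, $(z+z')v = zv+z'v$, reduces to the $\erre$-linearity of the expressions in $(r,s)$. The real unit axiom is clear: taking $z=1$, i.e.\ $r=1,s=0$, gives $(1\cdot x-0)+(0+1\cdot x)\ui = x+y\ui = v$, wait—one must be careful here, since the correct check is that $z=1+0i$ returns $v$, which it does. The compatibility $1_\ci\cdot v = v$ thus holds.

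The one axiom requiring genuine computation is associativity of the scalar action, $z(z'v)=(zz')v$ for $z=r+si$, $z'=r'+s'i$. Here I would compute both sides explicitly: expanding the product $zz'=(rr'-ss')+(rs'+sr')i$ in $\ci$ and applying the formula once gives a vector whose $\1$- and $\ui$-components must match those obtained by applying the action of $z'$ first and then of $z$. This is a purely formal matching of the real bilinear expressions in $r,s,r',s',x,y$; because the formula \eqref{product by a complex scalar in A_C} is exactly the realification of multiplication by $z$ treating $\ui$ as a formal square root of $-1$, the two sides agree. I do not expect any obstacle here beyond bookkeeping—crucially, no use of associativity or commutativity of $\alg$ itself is needed, since the action mixes only the two real slots $x,y$ via real scalars and never multiplies elements of $\alg$ together. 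This is precisely why the lemma can be stated ``one can easily infer,'' in contrast with Lemma~\ref{L:phiJ C-isomorphism}, where the analogous associativity $z_1(z_2x)=(z_1z_2)x$ forced an appeal to Artin's Theorem~\ref{artin} because the action there was implemented through genuine multiplication in $\alg$ by $\phi_J(z)$.
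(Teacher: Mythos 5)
Your proposal is correct and is exactly the routine verification the paper has in mind when it states that one ``can easily infer'' this lemma without giving a proof: the scalar action \eqref{product by a complex scalar in A_C} only ever multiplies elements of $\alg$ by real scalars, so all axioms, including $z(z'v)=(zz')v$, reduce to real-bilinear bookkeeping and no appeal to Artin's theorem is needed (in contrast with Lemma \ref{L:phiJ C-isomorphism}). Aside from the harmless slip $(0+1\cdot x)\ui$ for $(0+1\cdot y)\ui$ in the unit check, which you immediately correct, there is nothing to add.
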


Therefore we have the following

\begin{Lem}\label{L:Phi continuous homo}
If $J \in \su_\alg$, then the mapping $\Psi_J : \alg_\ci \funzione \alg$ defined by
\begin{equation}\label{PsiJ}
  \Psi_J(a+b\ui) := a + Jb, \qquad a,b \in \alg,
\end{equation}
is a continuous complex vector space {linear map} when $\alg_\ci$ and $\alg$ are endowed with the complex structures defined by \eqref{product by a complex scalar in A_C} and \eqref{product in V_J}{, respectively}.
\end{Lem}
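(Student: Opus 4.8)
The plan is to treat the two assertions---$\ci$-linearity and continuity---separately, reducing $\ci$-linearity to $\erre$-linearity together with a single computation involving the imaginary unit. First I would note that the formula $\Psi_J(a+b\ui)=a+Jb$ is plainly additive and $\erre$-homogeneous in the pair $(a,b)\in\alg^2\simeq\alg_\ci$, so $\Psi_J$ is $\erre$-linear. Since $\alg_\ci$ and $\alg$ are finite dimensional real vector spaces and every $\erre$-linear map between such spaces is bounded, continuity is then immediate and independent of the chosen norms. The only genuine content therefore lies in the $\ci$-linearity.

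To obtain $\ci$-linearity, given that $\Psi_J$ is already $\erre$-linear and trivially commutes with multiplication by $1$, it suffices to check that it intertwines multiplication by $i$ on source and target. On the source, \eqref{product by a complex scalar in A_C} with $z=i$ gives $i(a+b\ui)=-b+a\ui$, hence $\Psi_J(i(a+b\ui))=Ja-b$. On the target, \eqref{product in V_J} together with $\phi_J(i)=J$ gives $i\cdot\Psi_J(a+b\ui)=J(a+Jb)=Ja+J(Jb)$.

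The step I expect to require the most care---being the only point where the nonassociativity of $\alg$ intervenes---is the identity $J(Jb)=-b$, which I cannot take for granted from associativity. Here I would invoke Artin's Theorem \ref{artin} exactly as in the proof of Lemma \ref{L:phiJ C-isomorphism}: the subalgebra generated by the two elements $J$ and $b$ is associative, so $J(Jb)=(JJ)b=J^2b=-b$. Substituting this into the target computation yields $i\cdot\Psi_J(a+b\ui)=Ja-b$, which coincides with $\Psi_J(i(a+b\ui))$. Because $\Psi_J$ is $\erre$-linear and commutes with multiplication by both $1$ and $i$, it commutes with multiplication by every element of $\ci=\erre\oplus\erre i$; hence $\Psi_J$ is a $\ci$-linear map, completing the proof.
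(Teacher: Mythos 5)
Your proposal is correct and follows essentially the same route as the paper: the only substantive point in both arguments is the identity $J(Jb)=(JJ)b=-b$, justified by Artin's Theorem \ref{artin}, and your reduction of $\ci$-homogeneity to the single case $z=i$ (plus the obvious $\erre$-linearity and finite-dimensionality for continuity) is just a minor reorganization of the paper's direct computation of $z\Psi_J(v)=\Psi_J(zv)$ for general $z=r+si$.
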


\begin{proof}
Let $z = r + si \in \ci$, $r, s \in \erre$, and $v = x + y\ui \in \alg_\ci$, $x, y \in \alg$. Recalling definitions 
\eqref{product by a complex scalar in A_C} and \eqref{product in V_J}, and using \eqref{r(ab) = (ra)b = a(rb)}, \eqref{ra = ar}, and Artin's Theorem \ref{artin}, we get 
\begin{align}  
  z \Psi_J(v) 
    & = \phi_J(z)(x + Jy) \notag \\    
    & = (r+sJ)(x + Jy) \notag \\
    & =  rx+ r(Jy) + (sJ)x + (sJ)(Jy) \notag \\
    & =  rx+ J(ry+sx) + s((JJ)y) \notag \\
    & = (rx-sy) + J(ry+sx) \notag \\  
    & = \Psi_J((rx-sy) + (ry+sx)\ui) \notag \\
    & =  \Psi_J((r+si)(x+y\ui)) = \Psi_J(zv). \notag
\end{align}
Thus $\Psi_J$ is homogeneous. The additivity and the continuity are clear.
\end{proof}

Bearing in mind Artin's Theorem 2.1, it is easy to prove the following {lemma.}

\begin{Lem}\label{cond for f.g=fg}
Let $J\in \su_\alg$. Let 
$f = \II(F)$, $g = \II(G)$, and $h = \II(H)$ be slice functions on $\Omega_D$. 
If $F$ takes values in $\ci_J\otimes_\erre\ci$, then $f \cdot g = f g$ on $\Omega_D\cap\ci_J$.
If $F,G,H$ take values in $\ci_J\otimes_\erre\ci$,  then $(f\cdot g)\cdot h = f \cdot (g\cdot h)$ and $f\cdot g=g\cdot f$.
\end{Lem}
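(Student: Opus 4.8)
The plan is to pass to stem functions, where by Definition~\ref{D:slice product} the slice product becomes the pointwise product in $\alg_\ci$: one has $f\cdot g=\II(FG)$, $(f\cdot g)\cdot h=\II((FG)H)$, $f\cdot(g\cdot h)=\II(F(GH))$ and $g\cdot f=\II(GF)$, where $FG$, $(FG)H$, $F(GH)$, $GF$ are computed pointwise. Each of the three assertions will then follow from a pointwise identity about products in $\alg_\ci$ of values lying in the subspace $\ci_J\otimes_\erre\ci$, after which one applies $\II$.

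For the first assertion I would use the map $\Psi_J$ of Lemma~\ref{L:Phi continuous homo}. Comparing \eqref{I(F)=} with \eqref{PsiJ}, for $x=\phi_J(z)\in\ci_J$ we have $\II(F)(x)=F_1(z)+JF_2(z)=\Psi_J(F(z))$; hence on $\Omega_D\cap\ci_J$ the equality $f\cdot g=fg$ amounts to
\[
  \Psi_J\big(F(z)G(z)\big)=\Psi_J(F(z))\,\Psi_J(G(z)),\qquad z\in D.
\]
Setting $v:=F(z)=a+b\ui$ with $a,b\in\ci_J$ (this is precisely the hypothesis on $F$) and $w:=G(z)=c+d\ui$ with $c,d\in\alg$ arbitrary, the left-hand side is $(ac-bd)+J(ad+bc)$, whereas $\Psi_J(v)=a+Jb$ lies in $\ci_J$ since $a,b,J\in\ci_J$. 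Expanding $(a+Jb)(c+Jd)$ and simplifying with the alternative law of $\alg$—in particular $J(Jw)=(JJ)w=-w$—together with \eqref{r(ab) = (ra)b = a(rb)} and \eqref{ra = ar}, one verifies directly that the two sides agree.

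For the second and third assertions the key remark is that $\ci_J\otimes_\erre\ci$ is a \emph{commutative and associative} subalgebra of $\alg_\ci$: indeed $\ci_J=\langle 1,J\rangle$ is isomorphic to $\ci$ (Lemma~\ref{L:phiJ C-isomorphism}), hence commutative and associative, and the complexification of a commutative associative real algebra is again commutative and associative; moreover the product of $\alg_\ci$ restricts to this subspace because $\ci_J$ is closed under the product of $\alg$. Consequently, when $F,G,H$ take values in $\ci_J\otimes_\erre\ci$, at every $z\in D$ the values $F(z),G(z),H(z)$ associate and commute, so $(FG)H=F(GH)$ and $FG=GF$ pointwise; applying $\II$ yields $(f\cdot g)\cdot h=f\cdot(g\cdot h)$ and $f\cdot g=g\cdot f$.

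The delicate point is the first assertion. Since $g$, and hence $w$, is an arbitrary slice function, one cannot invoke commutativity or associativity of $\alg$, and the identity $\Psi_J(vw)=\Psi_J(v)\Psi_J(w)$ genuinely fails unless the \emph{left} factor $v$ has components in $\ci_J$. The crux is to let the two $\ci_J$-valued components of $v$ move past the imaginary unit $J$ and past the arbitrary element $w$; this is controlled by the vanishing of the associator $(\xi,\eta,w)$ for $\xi,\eta\in\ci_J$ and $w\in\alg$, which holds because $\xi,\eta$ and $w$ all lie in the subalgebra generated by $J$ and $w$, associative by Artin's Theorem~\ref{artin}.
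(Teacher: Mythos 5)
Your proof is correct. The paper does not actually write out a proof of this lemma---it only remarks that the statement ``is easy to prove'' bearing in mind Artin's Theorem~\ref{artin}---and your argument (reducing everything to pointwise identities among values of the stem functions in $\alg_\ci$, checking that $\Psi_J$ is multiplicative when the left factor has components in $\ci_J$ via the vanishing of the associator of $J$, $J$, $w$ and of $\xi,\eta\in\ci_J$ with arbitrary $w$, and noting that $\ci_J\otimes_\erre\ci$ is a commutative associative subalgebra for the second part) is precisely the verification the authors intend.
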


In the situation of previous Lemma \ref{cond for f.g=fg}, we will omit the parentheses.

We are now in position to prove the general Cauchy integral representation formula for slice functions, the natural noncommutative
and nonassociative generalization of the classical complex Cauchy integral formula:
\begin{equation}\label{classical cauchy formula}
  F(z) = \frac{1}{2\pi i} \int_{\partial D} \frac{F(\zeta)}{\zeta - z} \de \zeta - 
             \frac{1}{2\pi i}\int_D  \frac{(\partial F/\partial \overline{\zeta})(\zeta)}{\zeta - z} \de \zetabar \wedge \de \zeta
\end{equation}
holding for $F \in C^1(\overline{D};\ci)$, where $D \subseteq \ci$ is a bounded domain with piecewise $C^1$ boundary. Here $\frac{1}{2i}\de \zetabar \wedge \de \zeta$ is the 2-dimensional Lebesgue measure on $\ci$. 

\begin{Thm}[Cauchy integral formula]\label{T:Cauchy formula}
Let $D \subseteq \ci$ be a bounded domain, $J \in \su_\alg$ and $D_J := \Omega_D \cap \ci_J$. Let $\partial D_J$ denote the boundary of $D_J$ in $\ci_J$ and assume that it is piecewise $C^1$. If $f = \II(F) : \Omega_D \funzione \alg$ is a left slice function and $F \in C^1(\overline{D};\alg_\ci)$, then
\begin{equation}\label{Cauchy formula}
  f(x) = \frac{1}{2\pi} \int_{\partial D_J} \left[C_y \cdot \left(J^{-1}\de y\ f(y)\right)\right](x) - 
           \frac{1}{2\pi} \int_{D_J} \Big[ C_y \cdot \Big( J^{-1} \de y^c \wedge \de y\ \frac{\partial f}{\partial y^c}(y) \Big) \Big](x)  \notag
\end{equation}
for every $x \in \Omega_D$.
\end{Thm}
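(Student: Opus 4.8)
The plan is to use that \emph{both} sides of the asserted identity are left slice functions of the variable $x$, together with the fact, recorded in Proposition~\ref{P:representation}, that a left slice function is completely determined by its restriction to a single plane $\ci_J$. It will therefore suffice to prove the equality for $x\in D_J=\Omega_D\cap\ci_J$ and then to propagate it to every $x\in\Omega_D$ through the representation formula \eqref{repr}. This is exactly the feature that repairs the nonassociative case: the naive pointwise identity \eqref{intro:cauchy formula} fails to represent $f$ away from $\ci_J$, whereas replacing the pointwise product by the slice product makes each integrand a genuine slice function of $x$, so that agreement on one plane forces agreement on all of $\Omega_D$.

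First I would check that the right-hand side is a left slice function of $x$. For fixed $y\in\ci_J$ the integrand $x\mapsto[C_y\cdot c_y](x)$, with $c_y\in\alg$ the constant $J^{-1}\de y\,f(y)$ (respectively $J^{-1}\de y^c\wedge\de y\,\tfrac{\partial f}{\partial y^c}(y)$), is the slice function $\II(F_{C_y}c_y)$, where by \eqref{cauchy kernel} the stem function of $C_y$ is $F_{C_y}(z)=(z^2-t(y)z+n(y))^{-1}(y^c-z)$. Since the evaluation $G\mapsto\II(G)(x)=G_1(z)+IG_2(z)$ (for $x=r+sI$, $z=r+si$) is $\erre$-linear and continuous in the stem function $G$, integration in $y$ commutes with $\II$, so the right-hand side equals $\II$ of the integrated stem function and is itself a slice function. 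The integrands are integrable: an interior $x$ never meets the sphere $\su_y$ for $y\in\partial D_J$, while in the area term the $y\in D_J$ with $x\in\su_y$ form, for each fixed $x$, a set of measure zero carrying an integrable singularity.

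Next I would fix $x\in D_J$ and collapse the slice products to ordinary products. For $y\in\ci_J$ the stem function $F_{C_y}$ takes values in $\ci_J\otimes_\erre\ci$, because the scalar $(z^2-t(y)z+n(y))^{-1}$ lies in $\ci$ and $y^c-z\in\ci_J\otimes_\erre\ci$; hence Lemma~\ref{cond for f.g=fg} gives $[C_y\cdot c_y](x)=C_y(x)\,c_y$ on $\ci_J$, and \eqref{cauchy kernel} gives $C_y(x)=(y-x)^{-1}$ there. I would then transport everything to $\ci$ through the isomorphism $\phi_J$ of \eqref{phiJ} and the map $\Psi_J$ of \eqref{PsiJ}, writing $y=\phi_J(\zeta)$, $x=\phi_J(z)$ and $y^c=\phi_J(\zetabar)$, so that $f(y)=\Psi_J(F(\zeta))$, $\tfrac{\partial f}{\partial y^c}(y)=\Psi_J\big(\tfrac{\partial F}{\partial\zetabar}(\zeta)\big)$ and $f(x)=\Psi_J(F(z))$. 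Applying the classical Cauchy--Pompeiu formula \eqref{classical cauchy formula} componentwise to the $\alg_\ci$-valued map $F$, and then the continuous complex-linear operator $\Psi_J$ to the resulting vector identity, I would use the homogeneity $\Psi_J(wv)=\phi_J(w)\Psi_J(v)$ of Lemma~\ref{L:Phi continuous homo} to pull the complex scalars out as elements of $\ci_J$: in the boundary term $\tfrac{1}{i(\zeta-z)}$ becomes $\phi_J\big(\tfrac{1}{i(\zeta-z)}\big)=(y-x)^{-1}J^{-1}$, whereas in the area term the factor $1/i$ is absorbed through $\de\zetabar\wedge\de\zeta=2i\,\de\rho\,\de\sigma$ together with $\de y^c\wedge\de y=2J\,\de\rho\,\de\sigma$. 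Recording also $\de y=\phi_J(\de\zeta)$ and $\de y^c=\phi_J(\de\zetabar)$, and that $C_y(x)=(y-x)^{-1}$, the two transported integrals coincide term by term with the two integrals in the statement, which proves the identity for $x\in D_J$.

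I expect the main obstacle to be precisely the bookkeeping of this last transport: matching the scalar $1/i$ against $J^{-1}$ in the boundary term, matching the area form $\de\zetabar\wedge\de\zeta$ against $J^{-1}\de y^c\wedge\de y$ (so that the $J^{-1}$ there cancels and a factor $2$ survives), and keeping the noncommutative value $f(y)\in\alg$ on the right throughout. Every rearrangement of the products $(y-x)^{-1}J^{-1}(\de y)\,f(y)$ must be justified by Artin's Theorem~\ref{artin}, which applies because the first three factors all lie in the commutative and associative plane $\ci_J$ and hence associate freely with the single $\alg$-valued factor. Once the plane identity $f=\text{RHS}$ on $D_J$ is established, Proposition~\ref{P:representation} and \eqref{repr} immediately extend it to every $x\in\Omega_D$, completing the proof.
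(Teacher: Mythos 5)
Your proposal follows essentially the same route as the paper's proof: establish the identity for $x\in D_J$ by transporting the classical vector-valued Cauchy--Pompeiu formula for $F$ through $\phi_J$ and the $\ci$-linear map $\Psi_J$ of Lemma~\ref{L:Phi continuous homo}, collapse the slice product to the pointwise product on $\ci_J$ via Lemma~\ref{cond for f.g=fg} with $C_y(x)=(y-x)^{-1}$, justify the rearrangements by Artin's Theorem~\ref{artin}, and then extend to all of $\Omega_D$ by Proposition~\ref{P:representation}. The only cosmetic difference is that you work ``downward'' from the stated formula to the classical one and spend an extra paragraph verifying that the right-hand side is a slice function of $x$, a point the paper asserts more briefly; the mathematical content is the same.
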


Before showing the proof, some remarks should be made. 

\begin{Rem}\label{remark31}\ 
\begin{itemize}
\item[(i)]
As we mentioned in the {i}ntroduction, the position of the ``differentials'' inside integrals of $\alg$-valued functions is important, so a rigorous definition is in order. We limit ourselves to the integrals involved in the Cauchy formula. If $a, b \in \erre$, $a < b$, and 
$\alpha : \clint{a,b} \funzione \ci_J$ is a piecewise $C^1$ parametrization of the (counterclockwise oriented) Jordan curve 
$\partial D_J$ in the plane $\ci_J$, then
\[
  \int_{\partial D_J} \left[ C_y \cdot \left( J^{-1}\de y\ f(y) \right) \right](x) :=
  \int_a^b \left[ C_{\alpha(t)} \cdot \left( J^{-1} \alpha'(t) \ f(\alpha(t)) \right) \right](x) \de t,
\]
$\alpha'$ being the derivative of $\alpha$. The second integral is simply
\[
  \int_{D_J} \Big[ C_y \cdot \Big( J^{-1} \de y^c \wedge \de y\ \frac{\partial f}{\partial y^c}(y) \Big) \Big](x) := 
 2 \int_{D_J} \Big[ C_{\rho+\sigma J} \cdot   \frac{\partial f}{\partial y^c}(\rho+\sigma J)  \Big](x) \de \rho \de \sigma,
\]
$(\rho,\sigma)$ being the coordinates of $y$ in $\ci_J = \{y = \rho+\sigma J\ :\ \rho, \sigma \in \erre\}$. Hence 
$\frac{J^{-1}}{2}dy^c \wedge dy$ may be considered as the 2-dimensional Lebesgue measure on $\ci_J \simeq \erre^2$.
\item[(ii)]
In the two integrand functions, the slice product $\cdot$ is computed with respect to the variable $x$:  $J^{-1}\de y\ f(y)$ and
$J^{-1} \de y^c \wedge \de y\ (\partial f/\partial y^c)(y)$ are here constant functions (w.r.t. $x$), $y$ being the (fixed) integration variable.
Using the notation $\cdot_x$ for the slice product w.r.t. $x$, the Cauchy formula can be written in the following way:
\begin{equation}\label{Cauchy formula-2 form}
  f(x) 
     = \frac{1}{2\pi} \int_{\partial D_J} C_y(x) \cdot_x \left(J^{-1}\de y\ f(y)\right) - 
           \frac{1}{2\pi} \int_{D_J} C_y(x) \cdot_x \Big( J^{-1} \de y^c \wedge \de y\ \frac{\partial f}{\partial y^c}(y) \Big). \notag
\end{equation}
\item[(iii)]
There are no parentheses in the term $J^{-1}\de y\ f(y) = J^{-1} \alpha'(t) \ f(\alpha(t)) dt$, because it belongs to the subalgebra generated by $J$ and $f(y)$, thus, by Artin's Theorem \ref{artin}, this product is associative.
\item[(iv)]
The formula of Theorem~\ref{T:Cauchy formula} reduces in the associative case to what stated in  \cite[Theorem~27]{GhiPer11}.
\end{itemize}
\end{Rem}

\begin{proof}[Proof of Theorem \ref{T:Cauchy formula}]
Let us first prove the theorem under the assumption
\begin{equation}\label{phiJ(z)=x}
    x \in D_J.
\end{equation}
Let $z := \phi_J^{-1}(x)\in\ci$ (see \eqref{phiJ}). Observe that, from \eqref{PsiJ} and \eqref{I(F)=}, we get
\begin{equation}\label{f(phiJ)=Phi_J(F)}
  f(\phi_J(w)) = \Psi_J(F(w)) \qquad \forall w \in \overline{D},
\end{equation}
\begin{equation}\label{f(partial phiJ)=Phi_J(partial F)}
  \frac{\partial f}{\partial y^c}(\phi_J(w)) = \Psi_J\Big(\frac{\partial F}{\partial \overline{w}}(w)\Big) 
  \qquad \forall w \in D.
\end{equation}
Let $\gamma : [0,1] \funzione \ci$ be a Jordan curve whose trace is $\partial D$ (counterclockwise oriented) and let $(\rho,\sigma)$ denote the real coordinates of $\zeta = \rho + \sigma i \in \ci$. Since 
 $F \in C^1(\overline{D};\alg_\ci)$, we can apply the classical vector complex Cauchy formula, which can be easily deduced from \eqref{classical cauchy formula} by means of the Hahn-Banach theorem, or simply using coordinates. We get
\begin{align}
  F(z) 
    & = \frac{1}{2\pi i} \int_{\partial D} \frac{F(\zeta)}{(\zeta - z)} \de \zeta - 
           \frac{1}{2\pi i} \int_D \frac{(\partial F/\partial \overline{\zeta})(\zeta)}{(\zeta-z)} \de \zetabar \wedge \de \zeta \notag \\
    & = \frac{1}{2\pi i} \int_0^1 \gamma'(t)\frac{F(\gamma(t))}{\gamma(t)-z}  \de t -
           \frac{1}{\pi} \int_D \frac{(\partial F/\partial \overline{\zeta})(\zeta)}{(\zeta-z)} \de \rho \de \sigma, \notag
\end{align}
where the product by a complex scalar in the integrand functions is defined by \eqref{product by a complex scalar in A_C}. Thus, recalling from Lemma \ref{L:Phi continuous homo} that $\Psi_J : \alg_\ci \funzione \alg$ is $\ci$-linear and continuous when $\alg_\ci$ and $\alg$ are endowed with the complex vector structures defined by \eqref{product by a complex scalar in A_C} and
\eqref{product in V_J}, we get
\begin{align}
  \Psi_J(F(z)) 
    & = \frac{1}{2\pi} \int_0^1 \Psi_J\left( \frac{\gamma'(t)}{(\gamma(t)-z)i} F(\gamma(t)) \right) \de t -
    \frac{1}{\pi} \int_D \Psi_J\left( \frac{(\partial F/\partial \overline{\zeta})(\zeta)}{(\zeta-z)} \right) \de \rho \de \sigma \notag \\
    & = \frac{1}{2\pi} \int_0^1 \frac{\gamma'(t)}{(\gamma(t)-z)i} \Psi_J(F(\gamma(t))) \de t -
           \frac{1}{\pi} \int_D \frac{1}{\zeta-z} \Psi_J \Big( \frac{\partial F}{\partial \overline{\zeta}}(\zeta)\Big) \de \rho \de \sigma \notag \\
     & = \frac{1}{2\pi} \int_0^1 \phi_J\left( \frac{\gamma'(t)}{(\gamma(t)-z)i} \right) \Psi_J(F(\gamma(t))) \de t  \\&-
            \frac{1}{\pi} \int_D \phi_J\left( \frac{1}{\zeta-z} \right) \Psi_J\Big( \frac{\partial F}{\partial \overline{\zeta}}(\zeta)\Big) 
              \de \rho \de \sigma. \notag
\end{align}
Now observe that $\gamma_J := \phi_J \circ \gamma : [0,1] \funzione \ci_J$ is a {(counterclockwise oriented)} parametrization of $\partial D_J$ and that
$\gamma_J' = \phi_J \circ \gamma'$. Hence, using \eqref{phiJ(z)=x}--\eqref{f(partial phiJ)=Phi_J(partial F)}, Lemma	\ref{L:phiJ C-isomorphism}, 
\eqref{cauchy kernel in the Jplane}, and Artin's Theorem \ref{artin}, we deduce that
\begin{align}
 f(x) & = \Psi_J(F(z)) \notag \\
  & = \frac{1}{2\pi} \int_0^1 \left((\gamma_J(t)-x)^{-1} J^{-1} \gamma_J'(t)\right) f(\gamma_J(t)) \de t -
        \frac{1}{\pi} \int_{D_J} (y-x)^{-1} \frac{\partial f}{\partial y^c}(y) \de  \rho \de \sigma \notag \\
  & = \frac{1}{2\pi} \int_0^1 \left(C_{\gamma_J(t)}(x) J^{-1} \gamma_J'(t)\right) f(\gamma_J(t)) \de t -
        \frac{1}{\pi} \int_{D_J} C_y(x) \frac{\partial f}{\partial y^c}(y) \de  \rho \de \sigma \notag \\
  & = \frac{1}{2\pi} \int_0^1 C_{\gamma_J(t)}(x) \left(J^{-1} \gamma_J'(t) f(\gamma_J(t))\right) \de t -
        \frac{1}{\pi} \int_{D_J} C_y(x) \frac{\partial f}{\partial y^c}(y) \de  \rho \de \sigma. \label{PhiJ(F(z))=....-2}
\end{align}
Now let us observe that if $a \in \alg$ and $y \in D_J$, then, thanks to Lemma~\ref{cond for f.g=fg}, it follows that
\[
  (C_y \cdot a)(x) = C_{y}(x) a \qquad \forall x \in \ci_J, 
\]
where $a$ denotes the constant function taking the value $a$. Therefore from \eqref{PhiJ(F(z))=....-2} we get
\begin{align}               
  f(x) & = \frac{1}{2\pi} \int_0^1 \left[ C_{\gamma_J(t)} \cdot \left(J^{-1} \gamma_J'(t)\ f(\gamma_J(t)) \right) \right](x) \de t -
         \frac{1}{\pi} \int_{D_J} \Big( C_y \cdot  \frac{\partial f}{\partial y^c}(y) \Big)(x) \de \rho \de \sigma \notag \\
  & = \frac{1}{2\pi} \int_{\partial D_J} \left[C_y \cdot \left(J^{-1} \de y\ f(y) \right) \right](x) - 
         \frac{1}{2 \pi} \int_{D_J} \Big[ C_y \cdot \Big( J^{-1} \de y^c \wedge \de y\ \frac{\partial f}{\partial y^c}(y) \Big) \Big](x), \notag
\end{align}
which proves the theorem in the case $x \in D_J$. In order to conclude, it is enough to invoke Proposition \ref{P:representation}, since $f$ and the function on the right hand side of the previous formula are slice functions on $\Omega_D$.
\end{proof}
  
\begin{Cor}[Cauchy formula for slice regular functions]\label{Cauchy formula regular}
Under the same assumptions of Theorem \ref{T:Cauchy formula}, if $f$ is left slice regular on $\Omega_D$, 
then 
\begin{equation}\label{Cauchy formula regular-eq}
  f(x) 
     = \frac{1}{2\pi} \int_{\partial D_J} \left[ C_y \cdot \left(J^{-1}\de y\ f(y)\right) \right](x) 
     = \frac{1}{2\pi} \int_{\partial D_J} C_y(x) \cdot_x \left(J^{-1}\de y\ f(y)\right) 
\end{equation}
for each $x\in\Omega_D$.
\end{Cor}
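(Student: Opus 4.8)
The plan is to derive the Corollary directly from Theorem~\ref{T:Cauchy formula} by showing that the area integral over $D_J$ vanishes identically whenever $f$ is left slice regular, so that only the boundary integral survives.

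First I would unwind the hypothesis. By Definition~\ref{def:slice-regular}, saying that $f = \II(F)$ is left slice regular on $\Omega_D$ means precisely that $\partial f/\partial x^c = \II(\partial F/\partial \overline{z}) = 0$ as a slice function on $\Omega_D$. Since every left slice function is induced by a \emph{unique} stem function (cf.\ the discussion preceding Proposition~\ref{P:representation}), this forces $\partial F/\partial \overline{z} \equiv 0$ on $D$, and hence $(\partial f/\partial y^c)(y) = 0$ for every $y \in \Omega_D$, in particular for every $y \in D_J$. The only point worth stating carefully is exactly this one: slice regularity makes the integrand of the area integral vanish \emph{pointwise in} $y$, not merely for some indirect analytic reason.

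Next I would substitute this into the representation furnished by Theorem~\ref{T:Cauchy formula}. The integrand of the second (area) integral is $\big[ C_y \cdot \big( J^{-1}\de y^c \wedge \de y\ (\partial f/\partial y^c)(y) \big) \big](x)$; since the factor $(\partial f/\partial y^c)(y)$, which is constant with respect to $x$, is zero for each $y \in D_J$, the whole integrand vanishes identically and the area integral equals $0$. What remains is exactly the boundary term
\[
  f(x) = \frac{1}{2\pi}\int_{\partial D_J} \big[ C_y \cdot (J^{-1}\de y\ f(y)) \big](x),
\]
valid for every $x \in \Omega_D$, which is the first equality in \eqref{Cauchy formula regular-eq}.

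Finally, the second equality in \eqref{Cauchy formula regular-eq} is purely notational: by Remark~\ref{remark31}(ii) the slice product in the integrand is computed with respect to the variable $x$, between the slice function $C_y$ and the constant (in $x$) function $J^{-1}\de y\ f(y)$, which is precisely what the symbol $\cdot_x$ abbreviates; no further argument is needed. I do not expect a genuine obstacle here, as the statement is a direct specialization of Theorem~\ref{T:Cauchy formula}.
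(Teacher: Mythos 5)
Your argument is correct and is exactly the (implicit) one the paper intends: the corollary is stated as an immediate specialization of Theorem~\ref{T:Cauchy formula}, obtained by noting that slice regularity means $\partial f/\partial x^c=\II(\partial F/\partial\overline z)=0$, so the area integrand vanishes pointwise in $y$ and only the boundary term survives, the second equality being the notational rewriting of Remark~\ref{remark31}(ii). No gap; your extra care about deducing the pointwise vanishing of $(\partial f/\partial y^c)(y)$ from the vanishing of the slice function is sound (and in fact already follows from the definition of $\partial f/\partial x^c$ as $\II(\partial F/\partial\overline z)$, without even invoking uniqueness of the stem function).
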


\section{Applications: series expansions}

The new Cauchy formula \eqref{Cauchy formula regular-eq} permits to prove series expansions of slice regular functions following the lines of the classical method used in the case of holomorphic functions of a complex variable: the expansion of the Cauchy kernel. 

We assume that $\alg$ is equipped with a norm $\| \cdot \|_\alg$ satisfying the property: $\|x\|_\alg=\sqrt{n(x)}$ for each $x \in Q_\alg$ (e.g.\ the Euclidean norm on $\quat$, $\oct$ and  $\erre_{0,q}$). As shown in \cite[\S3.1]{GhiPer14}, there exists $C_1>0$ such that $\|xy\|_\alg \leq C_1\, \|x\|_\alg\|y\|_\alg$ if $x,y\in \alg.$
We recall the definition of the metric $\sigma_\alg$ on $Q_\alg$ (cf.\ \cite{GenSto12} and \cite{GhiPer14}). For $x_0\in\ci_J$ and $x\in Q_\alg$, 
\begin{equation*}
\sigma_\alg(x,x_0):=
\begin{cases}
 \|x-x_0\|_\alg &\text{\ if }x\in \ci_J\\
\sqrt{{|\re(x)-\re(x_0)|}^2+{\left(\|\im(x)\|_\alg+\|\im(x_0)\|_\alg\right)}^2} &\text{\ if }x\notin \ci_J.
\end{cases}
\end{equation*}
Given $r \in \erre^+$, let $\Sigma_\alg(x_0,r)=\{x\in Q_\alg:\sigma_\alg(x,x_0)<r\}$ be the $\sigma_\alg$--ball of $Q_\alg$ centered at $x_0$ of radius $r$.  

Let $D$ be as in Theorem \ref{T:Cauchy formula} and $x_0$ a fixed point in $D_J$. For $x\in D_J$, $y\in\partial D_J$ and $n\in\enne$, the Cauchy kernel expands as:
\[C_y(x)=(y-x)^{-1}=\sum_{k=0}^n (x-x_0)^k(y-x_0)^{-k-1}+(x-x_0)^{n+1}(y-x)^{-1}(y-x_0)^{-n-1}.
\]
Let $(x-x_0)^{\cdot n}$ denote the $n$-th power of the slice function $x-x_0$ w.r.t.\ the slice product. Thanks to Proposition~\ref{P:representation} and Lemma~\ref{cond for f.g=fg}, we get that 
\[C_y(x)=\sum_{k=0}^n (x-x_0)^{\cdot k}\cdot_x(y-x_0)^{-k-1}+(x-x_0)^{\cdot n+1}\cdot_x C_y(x)\cdot_x(y-x_0)^{-n-1}
\]
for each $x\in\Omega_D$, $y\in\partial D_J$ and $n\in\enne$.
 Applying the Cauchy formula (Corollary~\ref{Cauchy formula regular}) to a slice regular function $f$, we obtain, for every $x\in\Omega_D$,
\[
  f(x) =  \sum_{k=0}^n (x-x_0)^{\cdot k}\cdot_x \frac{1}{2\pi} \int_{\partial D_J}(y-x_0)^{-k-1}  \left(J^{-1}\de y\ f(y)\right) + R_n(x)
 \notag
\]
with \[R_n(x)=(x-x_0)^{\cdot n+1}\cdot_x\frac{1}{2\pi} \int_{\partial D_J} C_y(x) \cdot_x (y-x_0)^{-n-1} \left(J^{-1}\de y\ f(y)\right).\]
Now  choose $r'>r>0$ such that $\Sigma_\alg(x_0,r')\subseteq\Omega_D$. As shown in \cite{GhiPer14}, $\Sigma_\alg(x_0,r')\cap\ci_J$ is an open disk $B_J(x_0,r')$ in $\ci_J$ and $\Sigma_\alg(x_0,r)=B_J(x_0,r)\cup \Omega(x_0,r)$, with $\Omega(x_0,r)$ a circular set.
From the representation formula for slice functions \eqref{repr} applied on $\Omega(x_0,r)$, one obtains 
that there exists a constant $C_2>0$ such that
\[
\|R_n(x)\|_\alg\le C_2 \sup_{x'\in B_J(x_0,r)}\|R_n(x')\|_\alg\quad\forall x\in\Sigma_\alg(x_0,r), \forall n\in\enne.
\]
Using Lemma~\ref{cond for f.g=fg}, we get that for every $x'\in B_J(x_0,r)$,
\[
R_n(x')=(x'-x_0)^{n+1}\frac{1}{2\pi} \int_{\partial D_J} C_y(x')  (y-x_0)^{-n-1} \left(J^{-1}\de y\ f(y)\right).
\]
Since $\|x'-x_0\|_\alg<r$ and $\|y-x_0\|_\alg\ge r'$ for each $x'\in B_J(x_0,r)$ and $y\in\partial D_J$, there exists $C_3>0$ 
 such that
\[
\|R_n(x)\|_\alg\le C_3 \left(\frac r{r'}\right)^{n+1}\quad\forall x\in\Sigma_\alg(x_0,r),\ \forall n\in\enne.
\]
We have proved the following result:

\begin{Prop}\label{prop4_1}
Let $D \subseteq \ci$ be a bounded domain, with $\partial D$ piecewise $C^1$. Let $r\in\erre^+$ with   $\overline{\Sigma_\alg(x_0,r)}\subseteq\Omega_D$.
If $f$ is left slice regular on $\Omega_D$ and of class $C^1$ on $\overline{\Omega}_D$, then there exists a unique sequence $(a_k)_{k\in\enne}$ in $\alg$, defined, for each $J\in\su_\alg$, by the formula
\[
a_k=\frac{1}{2\pi} \int_{\partial D_J}(y-x_0)^{-k-1}  \left(J^{-1}\de y\ f(y)\right),
\] 
such that
\begin{equation}
  f(x) = \sum_{k=0}^{+\infty}(x-x_0)^{\cdot k}\cdot_x a_k \notag
\end{equation}
with uniform convergence on $\Sigma_\alg(x_0,r)$. 
\end{Prop}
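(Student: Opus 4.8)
The plan is to obtain the expansion directly from the Cauchy formula for slice regular functions in Corollary~\ref{Cauchy formula regular}, by expanding the Cauchy kernel $C_y$ in slice powers of $x-x_0$ and integrating term by term. Since $f$ is of class $C^1$ on $\overline{\Omega}_D$, the inducing stem function satisfies $F\in C^1(\overline{D};\alg_\ci)$, so Corollary~\ref{Cauchy formula regular} applies and gives
\[
f(x)=\frac{1}{2\pi}\int_{\partial D_J}C_y(x)\cdot_x\bigl(J^{-1}\de y\ f(y)\bigr),\qquad x\in\Omega_D.
\]
I would then define the candidate coefficients $a_k$ by the stated integral and show that the partial sums $\sum_{k=0}^n(x-x_0)^{\cdot k}\cdot_x a_k$ approximate $f$ with an error decaying geometrically.

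First I would expand the kernel. For $x,y\in\ci_J$ one has $C_y(x)=(y-x)^{-1}$, and the elementary finite geometric identity yields
\[
(y-x)^{-1}=\sum_{k=0}^n(x-x_0)^k(y-x_0)^{-k-1}+(x-x_0)^{n+1}(y-x)^{-1}(y-x_0)^{-n-1},
\]
valid whenever $\|x-x_0\|_\alg<\|y-x_0\|_\alg$, all products being unambiguous since everything lies in the commutative associative plane $\ci_J$. The point is to upgrade this pointwise identity on $\ci_J$ to an identity between slice functions of $x$ on all of $\Omega_D$: as both sides are slice functions agreeing on $\ci_J$, Proposition~\ref{P:representation} together with Lemma~\ref{cond for f.g=fg} (which turns the pointwise products into slice products) gives
\[
C_y(x)=\sum_{k=0}^n(x-x_0)^{\cdot k}\cdot_x(y-x_0)^{-k-1}+(x-x_0)^{\cdot n+1}\cdot_x C_y(x)\cdot_x(y-x_0)^{-n-1}
\]
for every $x\in\Omega_D$. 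Substituting into the Cauchy formula and pulling the slice powers $(x-x_0)^{\cdot k}$ out of the integral (they are constant in the integration variable $y$) produces $f(x)=\sum_{k=0}^n(x-x_0)^{\cdot k}\cdot_x a_k+R_n(x)$ with remainder $R_n(x)=(x-x_0)^{\cdot n+1}\cdot_x\frac{1}{2\pi}\int_{\partial D_J}C_y(x)\cdot_x(y-x_0)^{-n-1}\bigl(J^{-1}\de y\ f(y)\bigr)$.

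The heart of the argument is the bound $\|R_n(x)\|_\alg\le C\,(r/r')^{n+1}$ uniformly on $\Sigma_\alg(x_0,r)$, for any $r'>r$ with $\overline{\Sigma_\alg(x_0,r')}\subseteq\Omega_D$, which I would establish in two steps. Writing $\Sigma_\alg(x_0,r)=B_J(x_0,r)\cup\Omega(x_0,r)$ with $\Omega(x_0,r)$ circular, the representation formula \eqref{repr} expresses the value of the slice function $R_n$ at any point of $\Omega(x_0,r)$ through its values at two conjugate points of $B_J(x_0,r)=\Sigma_\alg(x_0,r)\cap\ci_J$; invoking the submultiplicativity constant $C_1$ to control the algebra products, this reduces the estimate to $\sup_{x'\in B_J(x_0,r)}\|R_n(x')\|_\alg$. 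On the disk $B_J(x_0,r)\subseteq\ci_J$ the slice products collapse to ordinary products (Lemma~\ref{cond for f.g=fg}), so $R_n(x')=(x'-x_0)^{n+1}\frac{1}{2\pi}\int_{\partial D_J}C_y(x')(y-x_0)^{-n-1}\bigl(J^{-1}\de y\ f(y)\bigr)$, and using $\|x'-x_0\|_\alg<r$, $\|y-x_0\|_\alg\ge r'$ on $\partial D_J$, together with the boundedness of $f$ and of $C_y(x')$ on the relevant compact sets, gives the geometric decay. Letting $n\to\infty$ yields uniform convergence of the series to $f$ on $\Sigma_\alg(x_0,r)$.

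For uniqueness I would restrict to $\ci_J$, where each $(x-x_0)^{\cdot k}\cdot_x a_k$ equals the pointwise product $(x-x_0)^k a_k$, so via the isomorphism $\phi_J$ the expansion becomes an ordinary $\alg$-valued complex power series whose coefficients $a_k$ are uniquely determined by $f|_{\ci_J}$; Proposition~\ref{P:representation} then shows these coefficients are independent of the choice of $J\in\su_\alg$ and determine $f$ on all of $\Omega_D$. I expect the main obstacle to be precisely the two-step remainder estimate off the plane $\ci_J$: controlling $R_n$ on the genuinely noncommutative and nonassociative part $\Omega(x_0,r)$ of the $\sigma_\alg$-ball forces the reduction via \eqref{repr} to the disk $B_J(x_0,r)$, and one must track the submultiplicativity constants carefully so that the bound remains uniform in $n$. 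By contrast, the kernel expansion and the term-by-term integration are routine bookkeeping once Lemma~\ref{cond for f.g=fg} is used to pass between pointwise and slice products.
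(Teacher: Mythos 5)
Your proposal is correct and follows essentially the same route as the paper: the finite geometric expansion of $C_y(x)=(y-x)^{-1}$ on $\ci_J$, upgraded to a slice identity via Proposition~\ref{P:representation} and Lemma~\ref{cond for f.g=fg}, substituted into Corollary~\ref{Cauchy formula regular}, with the remainder estimated first on $\Omega(x_0,r)$ via the representation formula \eqref{repr} and then on the disk $B_J(x_0,r)$ where slice products collapse to pointwise ones. The only (harmless) quibble is that the finite kernel identity with remainder is an exact algebraic telescoping identity and needs no hypothesis $\|x-x_0\|_\alg<\|y-x_0\|_\alg$; that condition only matters for the decay of $R_n$.
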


A drawback of using $\sigma_\alg$ is that this metric is finer than the Euclidean one. This problem was solved in \cite{StoppatoAdvMath2012} introducing a new pseudo--metric and a different series expansion.
 For $x,x_0\in Q_\alg$, let us define
\[
\sto_\alg(x,x_0):=\sqrt{\|\Delta_{x_0}(x)\|_\alg}.
\]
The function $\sto_A$ is a pseudo--metric on $Q_\alg$  (cf.~\cite{StoppatoAdvMath2012} and \cite{GhiPer14}), called \emph{Cassini pseudo--metric}, whose induced topology is strictly coarser than the Euclidean one. 
Given $r\in\erre^+$,  the $\sto_\alg$--ball  $\Sto_\alg(x_0,r)=\{x\in Q_\alg:\sto_\alg(x,x_0)<r\}$ centered at $x_0$ of radius $r$ is a circular set.
For each $m\in\enne$ consider the slice regular functions, called \emph{spherical polynomials},
\[
\stx_{x_0,2m}(x):=\Delta_{x_0}(x)^m,
\quad
\stx_{x_0,2m+1}(x):=\Delta_{x_0}(x)^m(x-x_0).
\]
Let $D$ be as above and let $x_0$ be a fixed point in $D_J$. Using alternatively the two equalities
\begin{align*} 
C_y(x)&=(y-x)^{-1}=(y-x_0)^{-1}+(x-x_0)(y-x)^{-1}(y-x_0)^{-1},\\
C_y(x)&=(y-x)^{-1}=(y-x_0^c)^{-1}+(x-x_0^c)(y-x)^{-1}(y-x_0^c)^{-1}
\end{align*}
for $x\in D_J$ and $y\in\partial D_J$,  and then applying Proposition~\ref{P:representation} and Lemma~\ref{cond for f.g=fg}, we obtain, for each $x\in\Omega_D$ and $n\in\enne$:
\[\label{eq:cauchy-kernel}
C_y(x)=\sum_{k=0}^n \stx_{x_0,k}(x)\cdot_x\stx_{x_0,k+1}(y)^{-1}+\stx_{x_0,n+1}(x)\cdot_x C_y(x)\cdot_x\stx_{x_0,n+1}(y)^{-1}.
\]
We now proceed as above.
From Corollary~\ref{Cauchy formula regular} we get, for every $x\in\Omega_D$,
\[
  f(x) =  \sum_{k=0}^n  \stx_{x_0,k}(x)\cdot_x \frac{1}{2\pi} \int_{\partial D_J}\stx_{x_0,k+1}(y)^{-1}  \left(J^{-1}\de y\ f(y)\right) + R'_n(x)
 \notag
\]
with \[R'_n(x)= \stx_{x_0,n+1}(x)\cdot_x\frac{1}{2\pi} \int_{\partial D_J}C_y(x)\cdot_x\stx_{x_0,n+1}(y)^{-1}\left(J^{-1}\de y\ f(y)\right).\]
Let $r'>r>0$ such that $\Sto_\alg(x_0,r')\subseteq\Omega_D$. The set $\Sto_J(x_0,r):=\Sto_\alg(x_0,r)\cap\ci_J$ is an open subset of $\ci_J$ bounded by a Cassini oval.
From the representation formula \eqref{repr} we get
that there exists $C_3>0$ such that
\[
\|R'_n(x)\|_\alg\le C_3 \sup_{x'\in \Sto_J(x_0,r)}\|R'_n(x')\|_\alg\quad\forall x\in\Sto_\alg(x_0,r), \forall n\in\enne.
\]
Using Lemma~\ref{cond for f.g=fg}, we get that for every $x'\in \Sto_J(x_0,r)$,
\[
R'_n(x')= \stx_{x_0,n+1}(x')\frac{1}{2\pi} \int_{\partial D_J}C_y(x')\stx_{x_0,n+1}(y)^{-1}\left(J^{-1}\de y\ f(y)\right).
\]
Let  $x'\in \Sto_J(x_0,r)$ and  $y\in\partial D_J$. Since $\sto_\alg(x',x_0)<r$ and $\sto_\alg(y,x_0)>r'$, from \cite[Lemma~5.2]{GhiPer14} we get
\begin{align*}
\|&\stx_{x_0,n+1}(x')\|\le r^n(r+2\|\im(x_0)\|),
\\
\|&\stx_{x_0,n+1}(y)\|\ge (r')^{n+2}(r'+2\|\im(x_0)\|)^{-1}.
\end{align*}
Therefore there exists $C_4>0$ 
 such that 
\[
\|R'_n(x)\|_\alg\le C_4 \left(\frac r{r'}\right)^{n}\quad\forall x\in\Sto_\alg(x_0,r),\ \forall n\in\enne,
\]
from which we get the so-called \emph{spherical expansion} of $f$:
\begin{Prop}\label{prop4_2}
Let $D \subseteq \ci$ be a bounded domain, with $\partial D$ piecewise $C^1$. Let $r\in\erre^+$ with   $\overline{\Sto_\alg(x_0,r)}\subseteq\Omega_D$.
If $f$ is left slice regular on $\Omega_D$ and of class $C^1$ on $\overline{\Omega}_D$, then there exists a unique sequence $(s_k)_{k\in\enne}$ in $\alg$, defined, for each $J\in\su_\alg$, by the formula
\[s_k=\frac{1}{2\pi} \int_{\partial D_J}\stx_{x_0,k+1}(y)^{-1}  \left(J^{-1}\de y\ f(y)\right),\] 
such that
\begin{equation}
  f(x) = \sum_{k=0}^{+\infty}\stx_{x_0,k}(x)\cdot_x s_k \notag
\end{equation}
with uniform convergence on $\Sto_\alg(x_0,r)$. 
\end{Prop}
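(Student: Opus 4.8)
The plan is to obtain the spherical expansion by substituting a finite spherical expansion of the Cauchy kernel into the integral representation of Corollary~\ref{Cauchy formula regular}. I would begin on a single plane $\ci_J$, where $x$ and $y$ commute, and iterate alternately the two resolvent identities
\[
C_y(x)=(y-x_0)^{-1}+(x-x_0)(y-x)^{-1}(y-x_0)^{-1},\qquad
C_y(x)=(y-x_0^c)^{-1}+(x-x_0^c)(y-x)^{-1}(y-x_0^c)^{-1},
\]
so as to peel off the factors $\Delta_{x_0}(x)=(x-x_0)\cdot_x(x-x_0^c)$ that assemble the spherical polynomials $\stx_{x_0,k}$. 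This is a purely commutative computation on $\ci_J$; to promote the resulting identity
\[
C_y(x)=\sum_{k=0}^n \stx_{x_0,k}(x)\cdot_x\stx_{x_0,k+1}(y)^{-1}+\stx_{x_0,n+1}(x)\cdot_x C_y(x)\cdot_x\stx_{x_0,n+1}(y)^{-1}
\]
from $\ci_J$ to all of $\Omega_D$ I would invoke Proposition~\ref{P:representation} together with Lemma~\ref{cond for f.g=fg}, exactly as in the slice-power argument preceding Proposition~\ref{prop4_1}. Plugging this into Corollary~\ref{Cauchy formula regular} and reading off the $k$-th term yields the candidate coefficients $s_k=\frac{1}{2\pi}\int_{\partial D_J}\stx_{x_0,k+1}(y)^{-1}\left(J^{-1}\de y\ f(y)\right)$ and a remainder $R'_n$.

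The analytic core is the uniform control of $R'_n$ on the Cassini ball $\Sto_\alg(x_0,r)$. Since this ball is circular, I would first use the representation formula~\eqref{repr} to bound $\|R'_n(x)\|_\alg$ on the whole ball by a fixed multiple of its supremum over the slice $\Sto_J(x_0,r)=\Sto_\alg(x_0,r)\cap\ci_J$; this is precisely where the circular/slice structure reduces a nonassociative estimate to a one-complex-variable one. On $\Sto_J(x_0,r)$ Lemma~\ref{cond for f.g=fg} lets me replace every $\cdot_x$ by the ordinary pointwise product, so that $R'_n(x')=\stx_{x_0,n+1}(x')\,\frac{1}{2\pi}\int_{\partial D_J}C_y(x')\,\stx_{x_0,n+1}(y)^{-1}\left(J^{-1}\de y\ f(y)\right)$. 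I would then insert the two-sided estimates for the spherical polynomials from \cite[Lemma~5.2]{GhiPer14}, namely $\|\stx_{x_0,n+1}(x')\|\le r^n(r+2\|\im(x_0)\|)$ when $\sto_\alg(x',x_0)<r$ and $\|\stx_{x_0,n+1}(y)\|\ge(r')^{n+2}(r'+2\|\im(x_0)\|)^{-1}$ when $\sto_\alg(y,x_0)>r'$, combined with the uniform boundedness of $C_y(x')$ and of $f$ on the relevant compact sets and with the submultiplicativity constant $C_1$ for $\|\cdot\|_\alg$. This produces $\|R'_n(x)\|_\alg\le C_4(r/r')^n$ uniformly on $\Sto_\alg(x_0,r)$, and since $r<r'$ the remainder tends to $0$, giving the asserted uniform convergence.

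For uniqueness I would argue that term-by-term integration of the series against the kernel $\stx_{x_0,k+1}(y)^{-1}$ recovers each coefficient as the stated integral, so that $(s_k)$ is forced by $f$ and, by Proposition~\ref{P:representation}, is in particular independent of the chosen $J$. The main obstacle throughout is that the slice product is neither commutative nor associative, so none of the classical moves — iterating the resolvent identity, commuting constants through the product, estimating norms of products — is directly available on $\Omega_D$; the entire strategy is to confine every such manipulation to a single plane $\ci_J$, where Lemma~\ref{cond for f.g=fg} restores commutativity and associativity, and then to transport both the kernel identity and the remainder estimate back to $\Omega_D$ via Proposition~\ref{P:representation} and the circularity of the Cassini balls.
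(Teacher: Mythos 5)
Your proposal reproduces the paper's own argument essentially step for step: the alternating resolvent identities assembling the spherical polynomials, the promotion from $\ci_J$ to $\Omega_D$ via Proposition~\ref{P:representation} and Lemma~\ref{cond for f.g=fg}, the substitution into Corollary~\ref{Cauchy formula regular}, the reduction of the remainder estimate to the slice $\Sto_J(x_0,r)$ by the representation formula, and the two-sided bounds from \cite[Lemma~5.2]{GhiPer14} yielding $\|R'_n(x)\|_\alg\le C_4(r/r')^n$. The approach and all key ingredients coincide with the paper's proof, so nothing further needs to be said.
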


\begin{ex}[cf. \cite{GhiPer14} Ex.~5.6]
Let $A=\q$ and let $J\in\su_\q$ be fixed. Consider the slice regular function $f$ on $\q\smallsetminus\RR$ defined, for each $x=r+s I$, with $s>0$, by
\[f(x)=1-IJ.\]
We compute the power and spherical expansions of $f$ at $y=J$. 
The maximal $\sigma_\q$--ball centered at $J$ on which the power expansion converges to $f$ is the domain $\Sigma_{\q}(J,1)=\{q\in\CC_J\;|\;|q-J|<1\}$, which has empty interior w.r.t.\ the euclidean topology of $\q$. On the other hand, the spherical expansion converges to $f$ on a non-empty open domain of $\q$. 
We can compute the coefficients $a_k$ and $s_k$ by means of Propositions~\ref{prop4_1} and \ref{prop4_2}. Using the fact that $f$ assumes constant value 2 on $\ci_J^+=\{r+sJ\in\ci_J:s>0\}$ and vanishes on $\ci_J^-=\{r+sJ\in\ci_J:s<0\}$, we get
\[a_k=(\pi J)^{-1} \int_{\partial\Delta}(y-J)^{-k-1} \de y
=\begin{cases}2\text{\quad if $k=0$}\\0 \text{\quad if $k>0$}\end{cases},
\]
\[s_k=
\begin{cases}
(\pi J)^{-1} \int_{\partial\Delta}(y^2+1)^{-n-1}\de y &\text{\ if $k=2n+1$ is odd}\\
(\pi J)^{-1}\int_{\partial\Delta}(y^2+1)^{-n}(y-J)^{-1} \de y &\text{\ if $k=2n$ is even}\end{cases}\]
where $\Delta$ is a disk in $\ci_J$ centered at $J$ with radius smaller than 1.
Therefore
\[
s_0=2,\quad s_k=\begin{cases}
4^{-n}\binom{2n}n(-J)&\text{\quad if $k=2n+1$ is odd,}\\
4^{-n}\binom{2n}n&\text{\quad if $k=2n>0$ is even}\end{cases}
\]
and the spherical expansion of $f$ at $J$ takes the form
\begin{align*}
f(x)&=1-\sum_{n=0}^{+\infty} \frac1{4^n}\binom{2n}n (1+x^2)^nxJ.
\end{align*}

\end{ex}




\end{document}